\newcommand{\eps}{\varepsilon}
\newcommand{\D}{\mathcal{D}}
\newcommand{\Grad}{\mathrm{\nabla}}
\newcommand{\R}{\mathbf{R}}
\newcommand{\dx}{\, dx}
\newcommand{\dt}{\, dt}
\newcommand{\sgn}[1]{\mathrm{sign}^{+}\left(#1\right)}
\newcommand{\sgnm}[1]{\mathrm{sign}^{-}\left(#1\right)}
\newcommand{\Const}[1]{\mathrm{Const}_{#1}}
\newcommand{\pt}{\ensuremath{\partial_t}}
\newcommand{\Do}{\ensuremath{Q_T}}
\newcommand{\Om}{\ensuremath{\Omega}}
\newcommand{\N}{{\mathbb N}}
\newcommand\bel{\begin{equation}\label}
\newcommand\ee{\end{equation}}
\newcommand\ba{\begin{array}}
\newcommand\bal{\begin{array}{l}}
\newcommand\ea{\end{array}}
\def\dsp{\displaystyle}
\newtheorem{theo}{Theorem}[section]
\newtheorem{lem}{Lemma}[section]
\newtheorem{rem}{Remark}[section]
\newtheorem{defi}{Definition}[section]
\def\ptl{{\partial}}
\newcommand{\sign}{{\rm sign}}
\newcommand{\grad}{{\nabla}}
\def\dist{{\rm dist\,}}
\newcommand\1{\hbox{\hbox{1}\kern-.3em I}}
\def\dsp{\displaystyle}
\def\div{{\rm div\,}}
\def\grad{{\,\nabla }}
\def\meas{{\rm meas\,}}
\def\dist{{\rm dist\,}}
\def\al{{\alpha}}
\def\eps{{\varepsilon}}
\def\Om{{\Omega}}
\def\ptl{{\partial}}
\def\ba{\begin{array}}
\def\bal{\begin{array}{l}}
\def\ea{\end{array}}
\def\be{\begin{equation}}
\def\bel{\begin{equation}\label}
\def\ee{\end{equation}}
\def\iint{\int\!\!\int}
\def\sign{{\rm sign\,}}
\def\D{{D}}
\def\pt{\ensuremath{\partial_t}}
\def\D{{\mathcal D}}
\def\Del{{\scriptstyle \Delta}}
\def\char{{1\!\mbox{\rm l}}}
\newenvironment{Definitions}
{%

\begin{enumerate}}%
{\end{enumerate}
}
\newenvironment{DefinitionsP}
{%

\begin{enumerate}}%
{\end{enumerate} }
\begin{document}

\title[Well-posedness for degenerate parabolic equations] {Well-posedness
results for triply nonlinear degenerate parabolic equations}

\date{\today}

\author[B. Andreianov]{B. Andreianov}
\address[Boris Andreianov]{\newline
    Laboratoire de Math\'ematiques\newline
    Universit\'e de Franche-Comt\'e\newline
    16 route de Gray\newline
    25 030 Besanc$\hspace*{-3pt}_{_{^{\mathsf ,}}}$on
 Cedex, France}
\email[]{boris.andreianov\@@univ-fcomte.fr}

\author[M. Bendahmane]{M. Bendahmane}
\address[Mostafa Bendahmane]{\newline
     LAMFA, UMR-CNRS 6140\newline
    Universit\'e de Picardie Jules Verne,\newline
    33 rue Saint Leu, 80038 Amiens, France}
\email[]{ mostafa\_bendahmane\@@yahoo.fr}

\author[K. H. Karlsen]{K. H. Karlsen}
\address[Kenneth H. Karlsen]{\newline
    Centre of Mathematics for Applications \newline
    University of Oslo\newline
    P.O. Box 1053, Blindern\newline
    N--0316 Oslo, Norway}
\email[]{kennethk@math.uio.no}
\urladdr{http://folk.uio.no/kennethk}

\author[S. Ouaro]{S. Ouaro}
\address[Stanislas Ouaro]{\newline
    Laboratoire d'Analyse Math\'ematique des \'Equations LAME\newline
    UFR Sciences Exactes et Appliqu\'ees, University of Ouagadougou\newline
    03 BP 7021 Ouaga 03\newline
    Ouagadougou, Burkina-Faso}
\email[]{souaro@univ-ouaga.bf}

\subjclass[2000]{Primary 35K65; Secondary 35A05}

\keywords{Degenerate hyperbolic-parabolic equation, conservation law,
Leray-Lions type operator, non-Lipschitz flux, entropy solution, existence, uniqueness, stability}

\thanks{The work of K. H. Karlsen was supported by the Research Council of Norway through
an Outstanding Young Investigators Award. The work of S. Ouaro was
supported by the fundings from SARIMA, and then from the AUF. A
part of this work was done while B. Andreianov and S. Ouaro
enjoyed the hospitality of the Centre of Mathematics for
Applications (CMA) at the University of Oslo, Norway. This article
was written as part of the international research program on
Nonlinear Partial Differential Equations at the Centre for
Advanced Study at the Norwegian Academy of Science and
 Letters in Oslo during the academic year 2008--09.}

\begin{abstract}
We study the well-posedness of triply nonlinear degenerate
elliptic-parabolic-hyperbolic problem
$$
b(u)_t - \div \tilde{\mathfrak a}(u,\grad\phi(u))+\psi(u)=f,
\quad u|_{t=0}=u_0
$$
in a bounded domain with homogeneous Dirichlet boundary
conditions.
The nonlinearities $b,\phi$ and $\psi$ are supposed to
be continuous  non-decreasing, and the nonlinearity
$\tilde{\mathfrak a}$ falls within the Leray-Lions framework. Some
restrictions are imposed on the dependence of $\tilde{\mathfrak
a}(u,\grad\phi(u))$ on $u$ and also on the set where $\phi$
degenerates.
A model case is $\tilde{\mathfrak a}(u,\grad\phi(u))
=\tilde{\mathfrak{f}}(b(u),\psi(u),\phi(u))+k(u)\mathfrak{a}_0(\grad\phi(u)),$
with $\phi$ which is strictly increasing except on a locally
finite number of segments, and  $\mathfrak{a}_0$ which is of the
Leray-Lions kind. We are interested in  existence, uniqueness and
stability of entropy  solutions. If $b=\mathrm{Id}$, we obtain a
general continuous dependence result on data $u_0,f$ and
nonlinearities $b,\psi,\phi,\tilde{\mathfrak{a}}$.
Similar result
is shown for the degenerate elliptic problem which corresponds to
the case of $b\equiv 0$ and general non-decreasing surjective
$\psi$.
Existence, uniqueness and continuous dependence on data $u_0,f$
are shown when $[b+\psi](\R)=\R$ and  $\phi\circ [b+\psi]^{-1}$ is
continuous.
\end{abstract}

\maketitle

\newpage
\tableofcontents

\section{Introduction}
\label{sec:intro}
\subsection{Problem and assumptions}
In this paper we consider problems under the general form\\[-4pt]
$$
  \leqno (P) \qquad
   \begin{cases}
    \ptl_t b(u)  +\div { \tilde{\mathfrak f}}(b(u),\psi(u),\phi(u))
   -\div {\mathfrak a}(u,\grad \phi(u))+\psi(u)=f,
     \\ \hspace*{70mm}
     \qquad \text{in $Q_T=(0,T)\times\Om$},
      \\  u|_{t=0}=u_{0} \quad \text{in $\Omega$}, \qquad
      u=0 \quad \text{on $
      (0,T)\times\ptl\Om$},
   \end{cases}
$$
where $u:(t,x)\in Q_T\longrightarrow\R$ is the
unknown function, $T>0$ is a fixed time, $\Omega\subset\R^N$ is a
bounded domain with Lipschitz boundary $\ptl\Om$.

We assume
$$
\leqno (H_1) \qquad \left|\begin{array}{l}
 \text{the functions $b,\psi,\phi: \R\mapsto
\R$ are continuous
nondecreasing,}\\
\text{normalized by the value zero at the point zero.}
\end{array}\right.
$$

We require the following technical assumption on $\phi$:\\[-5pt]
$$
\leqno (H_2)\qquad \left|\begin{array}{l} \text{there exists a
closed set $E\subset \R$ such that
$\phi$ is strictly increasing on $\R\setminus E$,}\\
\text{and the Lebesgue measure $\;\meas \phi(E)\;$ is zero};\\[-10pt]
\end{array}\right.
$$
and, moreover,\\[-5pt]
$$
\leqno (H_3) \qquad \liminf_{\eps\downarrow 0} \displaystyle \;
\meas(G^\eps)/\eps<+\infty, \quad \text{where $G^\eps:=\{z\in
\R\,|\, \dist(z,\phi(E))<\eps\}$.}
$$
Notice that since $\phi(\cdot)$ is
continuous and strictly monotone on $\R\setminus E$, the set
$G:=\phi(E)$ is also closed.
\begin{rem}\label{rem:setE'}
(i) Hypotheses $(H_2)$,$(H_3)$ are trivially satisfied if $\phi$
is a strictly increasing function. In case $\phi$ has a finite
number of segments on which it keeps constant values, $E$ is just
the union of all these ``flatness segments'', and $(H_2)$,$(H_3)$
are
satisfied.\\[3pt]
(ii) Property $(H_2)$ is still true if $\phi$ is locally
absolutely continuous. In general, the set of discontinuity points
of $\phi^{-1}$ is not closed, and its closure can be large (this
is the case, e.g., if $\phi$ is the ``Cantor stairs function'').
Thus $(H_2)$ is a restriction, although it is fulfilled in most of
the practical cases.  Property $(H_3)$ is a further restriction.
Indeed, consider the following example. It is easy to construct a
Lipschitz continuous non-decreasing function $\phi$ such that
$G=\phi(E)$ is equal to $\{0\}\cup\{ 1/\sqrt{i}\,|\, i\in \N\}$. A
straightforward calculation shows that for $\eps=1/n$, $G^\eps$
contains the whole interval $[0,1/(2n^{2/3})]$; in this case
$\meas(G^\eps)/\eps$ is of order $\eps^{-1/3}$ and gets unbounded
as $\eps\to 0$.
\end{rem}

 The initial function
$u_0:\Om\to\R$ and the source $f:Q\to \R$ are assumed to fulfill
$$
\leqno (H_4) \qquad \left|\begin{array}{l}
\text{$u_0\in L^\infty(\Om)$;\; 
$f$ is measurable such that}\\
\text{$f(t,\cdot)\in L^\infty(\Om)$ for a.e.~$t\in (0,T)$ and
$\dsp\int_0^T
\|f(t,\cdot)\|_{L^\infty(\Om)}\,dt<+\infty$.\footnotemark{}}
\end{array}\right.
$$
\footnotetext{In the sequel, we will abusively denote the latter
quantity by $\|f\|_{L^1(0,T;L^\infty(\Om))}$.} Furthermore, the
following condition (automatically satisfied in the case
$b(\R)=\R$) is needed :
$$
\leqno (H_5) \qquad \left| \begin{array}{l} \dsp \text{in the case
$b(+\infty)\neq+\infty$ (resp.,
$b(-\infty)\neq - \infty$) one has}\\[5pt]
\dsp  \psi(+\infty)=+\infty \text{ and } f^+\in L^\infty(Q_T)
 \\[5pt] \quad\Bigl(\text{ resp., } \psi(-\infty)=-\infty \text{ and } f^-\in L^\infty(Q_T)\;\Bigr).
\end{array}\right.
$$
Assumptions $(H_4)$,$(H_5)$ are imposed to limit our
study to bounded solutions of $(P)$.

 Note that in view of $(H_1)$ and
$(H_5)$, we are assuming at least that $(b+\psi)(\R)=\R$.
 An important part of the paper in devoted to the case $b(\R)=\R$. If, in
addition, $b$ is bijective, then performing a change of the
unknown one puts the problem into the doubly nonlinear framework
with $b=\text{Id}$.
%
%

 Our continuous dependence
result for problem $(P)$ (in which we perturb both the data and
the nonlinearities) concerns the case where the structure
condition
$$
\leqno (H_{str})
 \qquad b(r)=b(s)  \;
\Rightarrow \; \phi(r)=\phi(s)
$$
is satisfied. This
result implies the existence of solutions for $(P)$, by reduction
to non-degenerate problems. Assumption $(H_{str})$ is trivially
satisfied in the case $b=\text{Id}$.
 If $(H_{str})$ fails, the convergence of approximate
solutions to $(P)$ is known only for a particular monotone
approximation method developped by Ammar and Wittbold
\cite{AmmarWittbold}. This approach leads to an existence result
which bypasses $(H_{str})$; but interesting issues (such as
proving convergence of numerical methods for $(P)$ without
requiring the structure condition $(H_{str})$) remain open. See
B\'enilan and Wittbold \cite{BenilanWittbold} for a thoroughful
discussion of the role of the structure condition for a simple
model one-dimensional case $\partial_t
b(u)+(\mathfrak{f}(u))_x=u_{xx}$.

Furthermore,
$$
\leqno (H_6) \qquad \text{the function
$\tilde{\mathfrak{f}}:\R\times\R\times\R \to \R^N$ is assumed
merely continuous.}
$$
  Notice that under the
structure condition $(H_{str})$, the dependency of
$\tilde{\mathfrak{f}}$ on $\phi(u)$ can de dropped. Whenever it is
convenient (and in particular, in the case where $b$ is
bijective), we use the notation
${\mathfrak{f}}(\cdot):=\tilde{\mathfrak{f}}(b(\cdot),\psi(\cdot),\phi(\cdot))$.

The function  ${\mathfrak a}:\R\times\R^N\to \R^N$ is assumed to
satisfy the following conditions :\
$$
\leqno (H_7) \qquad \text { $\mathfrak{a}$ is
continuous\footnotemark{} on $\R\times\R^N$, and
$\mathfrak{a}(r,0)\equiv 0$};
$$
$$
\leqno (H_8) \qquad \left|\begin{array}{l} \text{$\mathfrak
a(r,\cdot)$ is monotone, i.e.
 }\\
 (\mathfrak a(r,\xi)-\mathfrak a(r,\eta))\cdot(\xi-\eta)\geq 0
 \quad \text{for all $\xi,\eta\in\R^N$ and all $r\in \R$};
 \end{array}\right.
$$
$$
\leqno (H_9) \qquad \left|\begin{array}{l} \text{$\mathfrak
a(r,\cdot)$ is coercive at zero, i.e., there exist
 $p\in(1,+\infty)$ and  $C\in C(\R;\R^+)$
 }\\
\text{such that } \; \mathfrak a(r,\xi)\cdot\xi\geq \frac
1{C(r)}|\xi|^p\;
 \quad \text{  for all $\xi\in\R^N$ and all $r\in \R$};
 \end{array}\right.
$$
$$
\leqno (H_{10}) \qquad \left|\begin{array}{l}
\text{the growth of $\mathfrak a(r,\xi)$ is not greater than $|\xi|^{p-1}$, i.e.,}\\
\text{there exists  $C\in C(\R;\R^+)$ such that } \\
 \; |\mathfrak a(r,\xi)|\leq C(r)(1+|\xi|^{p-1})\;
 \quad \text{  for all $\xi\in\R^N$ and all $r\in \R$}.
 \end{array}\right.
$$
  \footnotetext{ the assumption of continuity of
$\mathfrak{a}$ in the first variable can be relaxed: see
Remark~\ref{rem:NoSecondStructureCond}}
It follows from $(H_7)$-$(H_{10})$ that for all $r$, the operator
$w \mapsto -\div\, \mathfrak a(r,\grad w)$ is an operator acting
from $W^{1,p}_0(\Om)$ to $ W^{-1,p'}(\Om)$, where
$p'=\frac{p}{p-1}$. Since the work of Leray and Lions
\cite{LerayLions}, this assumption became classical. It can be
generalized to the framework of Orlicz spaces (see the works of
Ka\v{c}ur \cite{Kacur} and those of Benkirane and collaborators
(see e.g., \cite{BenkiraneElmahi},\cite{BenkiraneBennouna}), and
even to more general coercivity and growth assumptions. We refer
to Bendahmane and Karlsen \cite{BenKar:Renorm,BenKar:RenormII} for
the case of the anisotropic $p$-Laplacian. In the case of
dimension $N=1$, very general coercivity assumption
$\lim_{\xi\to\pm\infty} \mathfrak a(r,\xi)/\xi=+\infty$ (uniformly
for $r$ bounded) can be considered. In this framework, the
well-posedness for $(P)$ was already established by  Ouaro and
Tour\'e \cite{OuaroToure2002,OuaroToure2005,OuaroToure2007} and
Ouaro \cite{Ouaro2007} (see also B\'enilan and Tour\'e
\cite{BenilanToureIII}); notice that some essential arguments of
these works  are specific to the case $N=1$. Notwithstanding the
above generalizations, the classical Leray-Lions assumptions are
sufficient for us to illustrate the arguments of the existence
proof for $(P)$.

The relevant technical assumption in order to have uniqueness is

$$
\leqno (H_{11})
 \left|\begin{array}{l}
\text{there exists $C\in C(
\R^2;\R^+)$ such that }\;\\
 (\mathfrak a(r,\xi)-\mathfrak
 a(s,\eta))\cdot(\xi-\eta)+C(r,s)(1+|\xi|^p+|\eta|^p)|\phi(r)-\phi(s)|\geq 0\\
 \text{for all $\xi,\eta\in\R^N$ and all $r,s\in \R$ such that}\\
 \text{the segment
 which lies between $r$ and $s$}\\ \text{does not intersect the
 exceptional set $E$.}
  \end{array}\right.
$$
This assumption goes along the lines of Carillo and Wittbold
\cite{CarrilloWittbold}
 and combines the monotonicity condition
$(H_8)$ with a kind of Lipschitz continuity assumption on
$\mathfrak a(\cdot,\xi)\circ \phi^{-1}$ on the connected
components of $\R\setminus E$.

\begin{rem}\label{rem:NeglectedSet}
 Notice that  $E$ is a set of the values of $u$ that lead to
$\mathfrak a(u,\grad \phi(u))$ being zero. Indeed, since $\meas
\phi(E)=0$, we have $\grad \phi(u)=0$ a.e.~on the set where $u\in
E$; then we have $\mathfrak a(u,\grad \phi(u))=\mathfrak
a(u,0)=0$, regardless of the exact value of $u\in E$.
\end{rem}
\begin{rem}\label{rem:NoSecondStructureCond}
Notice that we do not assume the structure condition
$$
\phi(r)=\phi(s) \; \Rightarrow \;
\mathfrak{a}(r,\xi)=\mathfrak{a}(s,\xi) \qquad \forall
\xi\in\R^N.
$$
This means that $a(\cdot,\xi)$ can be discontinuous with respect
to $\phi(r)$; the set of discontinuities is included in $\phi(E)$
which,  by $(H_2)$, is a closed set of measure zero.
This technical assumption is needed to be able to ``cut off'' the
discontinuity set.

One can also consider $\mathfrak{a}(r,\xi)$ which is discontinuous
in $r$. E.g., take the case of $\mathfrak a(r,\xi)=\mathfrak
a_1(k(r),\xi)$ with $k(\cdot)$ piecewise continuous. Thanks to
Remark~\ref{rem:NeglectedSet}, it is reduced to our setting by a
change of unknown function $u$ into $v$ such that $u=\rho(v)$ with
$\rho$ non-strictly increasing, chosen so that $k(\rho(z))\equiv
\tilde k(z)$ with  $\tilde k(\cdot)$ continuous. Indeed, such
change of the unknown preserves assumption $(H_{str})$.
\end{rem}

Let us mention that the assumptions $(H_4)$,$(H_5)$
and $(H_9)$, $(H_{10})$ can also be generalized.
Within the framework of ``variational'' solutions, one usually
works with ``bounded energy initial data'', i.e., with $u_0$
measurable and such that $B(u_0)<+\infty$, where
\begin{equation}\label{eq:B-defi}
B(z):=\dsp \int_0^z \phi(s)\,db(s)
\end{equation}
is the function depending on $b$ and $\phi$ which we introduce
following Alt and Luckhaus \cite{AltLuckhaus}, and with relaxed
growth and coercivity assumptions allowing for additional terms
which depend on $B(r)+ |\phi(r)|^p+r\psi(r)$, these terms being
controlled by means of {\it a priori} estimates (see, e.g.,
\cite{AltLuckhaus,Kacur,AndrThesis}).

A more general framework is provided by the one of renormalized solutions (see
 \cite{BlanchardRedwane1,BlanchardRedwane2,CarrilloWittbold,AmmarWittbold,BlanchardPorretta,BenKar:Renorm} and the references therein).
  Nonetheless, the assumptions
$(H_4)$, $(H_5)$, $(H_9)$, and $(H_{10})$ are sufficiently weak
 to provide the starting point for the well-posedness theory for
renormalized solutions of $(P)$. Indeed, the uniqueness proof for
renormalized solutions of $(P)$ remains essentially the same as
the one of Carrillo and Wittbold in \cite{CarrilloWittbold}, and
the existence result is most easily obtained using bi-monotone
sequences of bounded entropy solutions, following Ammar and
Wittbold \cite{AmmarWittbold} and Ammar and Redwane
\cite{AmmarRedwane}.

\subsection{The notion of solution and known results}
Problem $(P)$ is of mixed elliptic-parabolic-hyperbolic type, and
thus combines the difficulties related to nonlinear conservation
laws with those related to nonlinear degenerate diffusion
equations. We refer to Kruzhkov \cite{Kruzkov} and to Leray and
Lions \cite{LerayLions}, Lions \cite{Lions}, Alt and Luckhaus
\cite{AltLuckhaus}, Otto \cite{Otto:L1_Contr} for the fundamental
works on these classes of equations, respectively.

One consequence is that the notion of weak solution (sometimes
called ``variational solution'') generally leads to
non-uniqueness, unless $\phi(\cdot)$ is strictly increasing.
 The notion of entropy solution we
use is adapted form the founding paper of Carrillo
\cite{Carrillo}, which extends the classical framework of entropy
solutions to scalar conservation laws to the case of problem $(P)$
with the linear diffusion $\mathfrak
a(u,\grad\phi(u))=\grad\phi(u)$. The uniqueness arguments of
\cite{Carrillo} were adapted by Carrillo and Wittbold
\cite{CarrilloWittbold} to the case of a nonlinear Leray-Lions
diffusion operator of the form $\mathfrak a(u,\grad u)$
corresponding to $\phi=\text{Id}$. The case of $b=\text{Id}$ and
of a diffusion operator of the form $\mathfrak a(\phi(u),\grad
\phi(u))$ is similar; one particular case is considered in
Andreianov, Bendahmane and Karlsen \cite{ABK}. Both frameworks are
sometimes referred to as ``doubly nonlinear''. Notice that a new
definition of an entropy solution, suitable also for the case of
doubly nonlinear anisotropic diffusion operators, was used in a
series of
 works by Bendahmane and Karlsen (see \cite{BenKar:Renorm,BenKar:RenormII} and
 references therein); the issue of existence in this general
 anisotropic framework is still open.
 The case of
triply nonlinear problems of the form $(P)$ has been first
addressed by  Ouaro and Tour\'e (see \cite{OuaroToure2007} and the
references therein) and Ouaro \cite{Ouaro2007}. Well-posedness
results are obtained in dimension $N=1$, under very general
coercivity conditions; see also the works of B\'enilan and Tour\'e
(\cite{BenilanToureIII} and the references therein).
 The multi-dimensional elliptic analogue of $(P)$ was recently addressed in the work Ammar and Redwane
\cite{AmmarRedwane},  in the framework of renormalized solutions;
their approach is quite similar to the ours, but the proofs of
\cite{AmmarRedwane} require a  special structure of the
nonlinearity $\phi$. We bypass the difficulties of
\cite{AmmarRedwane} using two observations in
Section~\ref{sec:MainLemmas} (see
Lemmas~\ref{SmallSetTruncationLemma},\ref{lem:WeakWeakOk}).

In most of the works cited hereabove, the homogeneous Dirichlet
boundary conditions were chosen. One should bear in mind that,
unless $\phi$ is strictly increasing, the boundary condition is
also understood in an entropy sense (see e.g. Bardos, LeRoux and
N\'ed\'elec \cite{Bardos:BC}, Otto \cite{Otto:BC}, Carrillo
\cite{Carrillo}). Focusing on the homogeneous boundary condition
is a simplification which seems to be not merely technical. A
partial extension of the Carrillo's techniques to inhomogeneous
boundary data can be found in Ammar, Carrillo and Wittbold
\cite{AmmarCarrilloWittbold}. Different techniques for the
inhomogeneous problem, based on the weak trace framework
introduced by Otto \cite{Otto:BC} and developed by Chen and Frid
\cite{ChFr1}, were used by Mascia, Porretta and Terracina
\cite{Mascia_etal:2000} and Michel and Vovelle
\cite{MichelVovelle}. A related, though somewhat more
straightforward approach was attempted by Andreianov and Igbida
\cite{AndrIgbida}. Notice that in all cases, a technique of
``going up to the boundary'' is preceded by obtaining the
fundamental weak formulations and entropy inequalities ``inside
the domain''. In the present paper, we also make the simplest
choice of the homogeneous Dirichlet data, and focus on deriving
the entropy inequalities.

\subsection{Main techniques and the outline of the paper}

  Our main concern is the existence for
$(P)$ (and more generally, the continuous dependence result with
respect to perturbations of the data and the nonlinearities) in
the case $b=\text{Id}$ (or, more generally, under the structure
condition $(H_{str})$). We extend the arguments of Andreianov,
Bendahmane and Karlsen \cite{ABK} developed for the case of
$\mathfrak{a}(u,\grad \phi(u))\equiv \mathfrak{a}(\grad \phi(u))$;
the main difficulty stems from the fact that we do not assume the
structure condition of Remark~\ref{rem:NoSecondStructureCond}, so
that $\mathfrak{a}(\phi^{-1}(\cdot),\xi)$ can be discontinuous.

In order to use the weak convergence while passing to the limit in
the nonlinear diffusion term in $(P)$, we use a version of the
classical Minty-Browder monotonicity argument. We use cut-off
functions to focalize on the intervals of the strict monotonicity
of $\phi$. We then show that the complementary of this set can be
neglected, thanks to a particular {\it a priori} estimate with the
cut-off function which localizes on a neigbourhood of the
``exceptional set'' $E$ of the values of $u$ introduced in $(H_2)$
(see Remark~\ref{rem:NeglectedSet} and
Lemma~\ref{SmallSetTruncationLemma}).

 In order to deal with the convection term in $(P)$,
we use the technique of nonlinear weak-$*$ convergence to a
measure-valued solution (as considered by Tartar, DiPerna,
Szepessy, Panov), or more exactly, to an entropy process solution
as developed by Gallou\"et and collaborators (see
\cite{GaHu,EyGaHe:book,EGHMichel} and references therein).
Considering entropy process solutions is a purely technical issue,
since the uniqueness proof also contains their identification to
entropy solutions.

The chain rule arguments of
Lemmas~\ref{lem:WeakWeakOk},\ref{lem:IBP-measures}  permit to
separate the two aforementioned weak convergence arguments, the
one for the diffusion term and the one for the convection term.

The uniqueness of an entropy solution is shown under the
assumption $(H_{11})$, with the help of
 $(H_3)$ and the estimate of Lemma~\ref{SmallSetTruncationLemma}; we follow Carrillo \cite{Carrillo}, Carrillo and Wittbold
\cite{CarrilloWittbold}, Eymard, Gallou\"et, Herbin and Michel
\cite{EGHMichel} and Andreianov, Bendahmane and Karlsen
\cite{ABK}.

While the full continuous dependence result strongly relies upon
the structure condition $(H_{str})$, the stability of entropy
solutions to $(P)$ with respect to the perturbation of the data
$(u_0,f)$ is shown under the weaker structure assumption
$$
\leqno (H'_{str}) \qquad \qquad (b+\psi)(r)=(b+\psi)(s) \;
\Rightarrow \; \phi(r)=\phi(s).
$$
 Let us stress the
fact that our argument using $(H'_{str})$ in not ``robust'', in
the sense that it cannot be directly adapted to the proof of
convergence of various kinds of approximate solutions.
 In
order to address the question of convergence of numerical
approximations of $(P)$, the stronger structure assumption
$(H_{str})$ still seems essential. For finite volume
approximations of the doubly nonlinear problem $(P)$ with
$b=\text{Id}$,  a convergence proof using $(H_{str})$ is given in
\cite{ABK}.

Let us give an outline of the paper. We start with definitions and
the formulation of the main results in
Section~\ref{sec:DefiResults}. In Section~\ref{sec:MainLemmas}, we
give the key ingredients of our techniques.
Section~\ref{sec:Uniqueness} concerns the adaptation of the
standard  uniqueness, $L^1$ contraction and comparison result for
entropy and entropy-process solutions of $(P)$.
 Section~\ref{sec:APrioriEstimates} contains the {\it a
priori } estimates for solutions. In
Section~\ref{sec:ContDependence}, we assume that $b$ is bijective
(or, more generally, conditions $(H_{str})$,$(H_5)$ are
satisfied); we deal with the convergence of solutions to problems
$(P_n)$ with perturbed coefficients.
 Section~\ref{sec:General-b} is
devoted to the proof of the well-posedness result for $(P)$.
 In Section~\ref{sec:Extensions} we give
existence, uniqueness and continuous dependence results for the
related elliptic equation $\psi(u) + \div { \tilde{\mathfrak
      f}}(\psi(u),\phi(u))-
      \div { \mathfrak a}(u,\grad \phi(u))=s\in L^\infty(\Om)$.

\section{Entropy solutions and well-posedness results}
\label{sec:DefiResults}

\subsection{Entropies and related notation}
As it was explained in the introduction, we need the notion of
weak solution for $(P)$ with additional ``entropy'' conditions. In
order to use entropy conditions in the interior of $Q_T$ and,
moreover, take into account the homogeneous Dirichlet boundary
condition, following Carrillo \cite{Carrillo} we will work with
the so-called ``semi-Kruzhkov'' entropy-entropy flux pairs
$(\eta^\pm_c,{\mathfrak q}^\pm_c)$ for each $c \in \R$:
$$
\eta^{+}_c (z)=(z-c)^{+}, \qquad \eta^{-}_c (z)=(z-c)^{-},
$$
$$
{\mathfrak q}^{+}_c(z)=\sgn{z-c} ({\mathfrak f}(z)-{\mathfrak
f}(c)), \qquad {\mathfrak q}^{-}_c(z)=\sgnm{z-c} ({\mathfrak
f}(z)-{\mathfrak f}(c)).
$$
By convention, we assign $(\eta^\pm_c)'(c)$ to be zero. Here
$(z-c)^\pm$ stand for the nonnegative quantities such that
$z-c=(z-c)^+ -(z-c)^-$, but we denote
\[
\sgn{z\!-\!c}=(\eta^+_c)'(z)=\left\{\ba{ll} 1,&\!\!\!\! z>c \\
0,&\!\!\!\! z\leq c \ea\right.\!\!, \quad
\sgnm{z\!-\!c}=(\eta^-_c)'(z)=\left\{\ba{ll} ~~~0,&\!\!\!\! z\geq
c
\\ -1,&\!\!\!\! z< c \ea\right.\!\!.
\]
At certain points, we will also need smooth regularizations of the
semi-Kruzhkov entropy-entropy flux pairs; it is sufficient to
consider regular ``boundary'' entropy pairs
$(\eta_{c,\eps}^\pm,\mathfrak{q}_{c,\eps}^\pm)$ (cf. Otto
\cite{Otto:BC} and the book \cite{Malek}), which are
$W^{2,\infty}$ pairs with the same support as
$(\eta_c^\pm,\mathfrak{q}_c^\pm)$, converging pointwise to
$(\eta_c^\pm,\mathfrak{q}_c^\pm)$ as $\eps\to 0$. In particular,
the functions
\[
\sign^+_\eps(z)= \frac{1}{\eps}\min\{z^+,\eps\}, \quad
\sign^-_\eps(z)=\frac{1}{\eps}\max\{-z^-,-\eps\}
\]
will be used to approximate
$\sign^\pm(\cdot)=(\eta_0^\pm)'(\cdot)$.
\begin{defi}\label{AThetaDef}
For a function $\varphi$ which is monotone on $\R$, for all
locally bounded piecewise continuous function $\theta$ on $\R$  we
can define (using, e.g., the Stiltjes integral)
\begin{equation*}
\varphi_\theta:z\in\R \mapsto \int_0^z \theta(s)\,d\varphi(s).
\end{equation*}
Moreover (see Lemma~\ref{FunctionalDepLemma} below), there exists
a continuous function $\widetilde \varphi_\theta$ such that $
\varphi_\theta(u)=\widetilde \varphi_\theta(\varphi(u)).$
\end{defi}
In the sequel, we denote by $b_c^\pm(\cdot)$ the function $\dsp
z\mapsto \int_0^z (\eta_c^\pm)'(s)\,db(s)$.

\subsection{Entropy and entropy process solutions}
For the sake of simplicity, we will in this paper work with
bounded entropy solutions, i.e., we require  that $u\in
L^\infty(Q_T)$ and put corresponding hypotheses on the data
$u_0,f$ and functions $b,\psi$. Note that the boundedness
assumption can be bypassed in the framework of renormalized
solutions (see in particular Ammar and Redwane
\cite{AmmarRedwane}); or in the framework of variational solutions
in the spirit of Alt and Luckhaus \cite{AltLuckhaus} (in this
case, one has to replace the functions $C(\cdot)$ in assumptions
$(H_9)$, $(H_{10})$, $(H_{11})$ with a constant $C$; further
changes are indicated in Remark~\ref{rem:bounded-b}).
\begin{defi}[entropy solution]
\label{def:Entropy-Solution} An entropy solution of $(P)$ is a
measurable function $u:\Do\to \R$ satisfying the following
conditions:
\begin{Definitions}
   \item (regularity) $u\in L^\infty(Q_T)$
   and
   $
   w=\phi(u)\in L^p(0,T;W^{1,p}_0(\Om)).
   $\\[-5pt]
   \label{def:entopy0}
   \item For all $\xi\in \D([0,T)\times \Om)$,
   \begin{equation}
      \label{eq:weakcond}
      \begin{split}
     \iint_{Q_T} \Biggl(b(u)\pt \xi
         & + {\mathfrak f}(u) \cdot \grad\xi
         - \mathfrak a(u,\Grad  w)\cdot
         \grad \xi -\psi(u)\xi\Biggl) \dx\dt
         \\ & +\int_\Omega b(u_0)\xi(0,\cdot)\dx
         + \iint_{Q_T}  f\xi \,dxdt=0.
      \end{split}
   \end{equation}
   \item For all $(c,\xi)\in \R^\pm\times\D([0,T)\times \overline{\Om})$, $\xi \geq 0$,
   and also for all $(c,\xi)\in \R\times\D([0,T)\times \Omega)$, $\xi \geq 0$,
   \begin{equation*}
      \begin{split}
         &\iint_{Q_T} \Biggl(b_c^\pm(u)\pt \xi
         + {\mathfrak q}_c^\pm(u) \cdot \grad\xi
         - (\eta_c^\pm)'(u) \mathfrak a(u,\Grad w)  \cdot
         \grad \xi -(\eta_c^\pm)'(u)\psi(u)\xi\Biggl) \dx\dt  \\ &  \qquad
         +\int_\Omega b_c^\pm(u_0)\xi(0,\cdot)\dx
         + \iint_{Q_T} (\eta_c^\pm)'(u)\, f\xi \,dxdt \ge 0.\\[-10pt]
      \end{split}
   \end{equation*}
   \label{def:entropy2}
\end{Definitions}
\end{defi}
\begin{rem}\label{rem:sub-super-solutions}
If in the above definition, $u$ satisfies {\rm \ref{def:entopy0}},
if {\rm \eqref{eq:weakcond}}  is replaced by the inequality
``$\geq$'' (resp., with the inequality ``$\leq$''), and if {\rm
\ref{def:entropy2}} holds with the entropies $\eta_c^+$ for $c\in
\R^+$ (resp., with the entropies $\eta_c^-$ for $c\in \R^-$), then
$u$ is called entropy subsolution (resp., entropy supersolution)
of $(P)$.
\end{rem}
\begin{rem}\label{rem:weak-AL-formulation}
Following Alt and Luckhaus \cite{AltLuckhaus}, we can rewrite the
weak formulation {\rm \eqref{eq:weakcond}} of $(P)$ as
follows:\\[3pt]
- $b(u)|_{t=0}=b(u_0)$ and the distributional
 derivative $\pt b(u)$ satisfies
$$
\pt b(u)\in L^{p'}(0,T;W^{-1,p'}(\Om))+L^1(Q_T)
$$
in the sense
$$
\int_0^T \langle \pt b(u)\,,\,\zeta\rangle
=-\iint_{Q_T} b(u)\,\pt\zeta -
\int_\Om b(u_0) \zeta (0,\cdot)
$$
for all $\zeta\in L^p(0,T,W^{1,p}_0(\Om))\cap L^\infty (Q_T)$ such
that $\pt \zeta\in L^\infty (Q_T)$ and $\zeta(T,\cdot)=0$;\\
- equation $(P)$ is satisfied in
$L^{p'}(0,T;W^{-1,p'}(\Om))+L^1(Q_T)$.\\[3pt]
We denote by $\langle \,\cdot\,,\,\cdot\,\rangle$ the
duality pairing between $L^{p'}(0,T;W^{-1,p'}(\Om))+L^1(Q_T)$ and
$L^p(0,T,W^{1,p}_0(\Om))\cap L^\infty (Q_T)$.
\end{rem}
 For technical reasons, it is convenient to introduce the notion
of entropy process solution adapted from Eymard, Gallou\"et and
Herbin \cite{EyGaHe:book},  Gallou\"et and Hubert \cite{GaHu} and
Eymard, Gallou\"et, Herbin and Michel \cite{EGHMichel}. This
definition is based upon the so-called ``nonlinear weak-$\star$
convergence'' property (see e.g., Ball \cite{Ball} and
Hungerb\"uhler~\cite{Hungerbuhler}):
\begin{equation}\label{nonlinweakstar}
\left|\begin{array}{l} \text{each sequence $(u_n)_n$ of measurable
functions}\\
\text{admits a subsequence such that for all $F\in C(\R,\R)$,}\\
\text{ $\; F(u_n(\cdot)) \to {\dsp\int_0^1}
F(\mu(\cdot,\alpha))\;d\alpha$ weakly in
$L^1(Q_T)$}\\
\text{whenever the set $(F(u_n))_n$ is weakly relatively compact
in $L^1(Q_T)$} ,
\end{array}\right.
\end{equation}
 where the function $\mu\in L^\infty(Q_T\times(0,1))$ is referred to
 as the
``process function''. Notice that in the above statement, one also
concludes that $F(\mu(\cdot,\alpha))$ independent of $\alpha$
whenever $F(u_n)$ converges to $F(u)$ in measure (see
Hungerb\"uhler \cite{Hungerbuhler}).
\begin{defi}[entropy process solution]
\label{def:Entropy-Solutionp} An entropy process solution of $(P)$
is a couple $(\mu,w)$ of measurable functions $\mu: Q_T\times
(0,1) \to \R$ and $w:Q_T\to\R$
 satisfying the following conditions:
\begin{DefinitionsP}
   \item (regularity and consistency) $\mu \in L^\infty(Q_T\times(0,1))$,
   $w\in L^p(0,T;W^{1,p}_0(\Om))$,\\[5pt]
   and
   $
   \phi(\mu(t,x,\alpha))\equiv w(t,x)
   $ for a.e.~$(t,x,\alpha)\in Q_T\times (0,1)$.\\[-5pt]
   \label{def:entropy1p}
\item For all $\xi\in \D([0,T)\times \Om)$,
   \begin{equation*}
     \begin{split}
         \int_0^1\iint_{Q_T} \Biggl( b(\mu)  \pt \xi
         & + {\mathfrak f}(\mu) \cdot \grad\xi - \mathfrak
         a(\mu,\grad w)\cdot \grad \xi -
          \psi(\mu)\xi
         \Biggr)\,dx\dt d\al
        \\ & +\int_\Omega u_0\xi(0,\cdot)\dx
        + \iint_{Q_T}  f\xi \,dxdt =0.
         \end{split}
   \end{equation*}
   \label{def:entropyWeakp}
   \item For all $(c,\xi)\in \R^\pm\times\D([0,T)\times \overline{\Om})$, $\xi \geq 0$,
   and also for all $(c,\xi)\in \R\times\D([0,T)\times \Omega)$, $\xi \geq 0$,
   \begin{equation*}
      \begin{split}
         &\int_0^1\!\!\! \iint_{Q_T} \!\!   \Biggl(\!  b^\pm_c(\mu)\pt \xi
         +  {\mathfrak q}_c^\pm(\mu) \cdot
         \grad\xi
         -  (\eta_c^\pm)'(\mu) \mathfrak a(\mu, \Grad w) \cdot
         \grad \xi - (\eta_c^\pm)'(\mu)\psi(\mu) \!\! \Biggr) \dx\dt d\al\\ &
         \qquad +\int_\Omega
         b^\pm_c(u_0)\xi(0,\cdot)\dx + \int_0^1\!\!\!\iint_{Q_T} (\eta^\pm_c)'(\mu)\,f\xi\,dxdtd\al \ge 0.
      \end{split}
   \end{equation*}
   \label{def:entropy2p}
\end{DefinitionsP}
\end{defi}
\begin{rem}\label{RemTwoFormsOfDiffusion} In {\rm \ref{def:entropy2p}}, setting $\dsp u:=\int_0^1\mu(\al)\,d\al$
one can rewrite the third term under the form
$$\dsp \int_0^1\!\!\!\iint_{Q_T}(\eta_c^\pm)'(\mu)
\mathfrak a(\mu, \Grad w)=
\iint_{Q_T}(\eta_c^\pm)'(u)\mathfrak{a}(u,\grad w).$$ Indeed, we
have $w\equiv \phi(\mu)$, and $\phi$ is invertible on $\R\setminus
E$, so that $\mu(t,x,\al)\equiv\phi^{-1}(w(t,x))=u(t,x)$ whenever
$w(t,x)\in \R\setminus \phi(E)$; furthermore, $\grad w=0$ a.e.~on
$[\,w\in\phi(E)\cup\{\phi(c)\}\,]$, and the exact value of
$(\eta_c^\pm)'(\mu)$ on $[\,w\in\phi(E)\cup\{\phi(c)\}\,]$ does
not matter because $\mathfrak a(r,0)\equiv 0$.
For the same reasons, $\mathfrak a(\mu, \Grad w)$ can be replaced
by $\mathfrak a(u, \Grad w)$ in {\rm \ref{def:entropyWeakp}}.
\end{rem}

\begin{rem}\label{rem:EPS-ES}
If $u$ is an entropy  solution of $(P)$, then the couple $(\mu,w)$
defined a.e.~on $Q_T\times(0,1)$ (resp., on $Q_T$) by
$\mu(t,x,\al)=u(t,x)$ (resp., by $w(t,x)=\phi(u(t,x))$), is an
entropy process solution of $(P)$. In turn, if
$(\mu,w)$ is an entropy process solution of $(P)$ such that
$\mu(t,x,\al)=u(t,x)$ a.e.~on $Q_T\times (0,1)$ for some
$u:Q_T\longrightarrow \R$, then the function $u$ is an entropy
solution of $(P)$.
\end{rem}

\subsection{Well-posedness of problem $(P)$ in the framework of entropy solutions.}
First note the uniqueness result, which requires no range
condition nor structure condition on the nonlinearities $b$ and
$\phi$.
\begin{theo}\label{th:EPSunique}
Assume that $(H_1)$-$(H_5)$ and
$(H_6)$-$(H_{11})$ hold.\\
(i) Assume that $(\mu,w)$ is an entropy process solution of $(P)$.
Then
$$
u(t,x)=\dsp \int_0^1 \mu(t,x,\al)\,d\al
$$
is an entropy solution of $(P)$.
Moreover, we have $b(\mu)(t,x,\al)\equiv
b(u)(t,x)$ and $\psi(\mu)(t,x,\al)\equiv \psi(u)(t,x)$ a.e.~on
$Q_T\times(0,1)$. If $(b+\phi+\psi)$ is strictly increasing, then
$\mu(t,x,\al)=u(t,x)$ a.e.~on $Q_T\times(0,1)$.
\\[3pt]
(ii) Assume that $u$ and $\hat u$ are two entropy solutions of
$(P)$ corresponding to the data $u_0,f$ and $\hat u_0, \hat f$,
respectively. Then for a.e.~$t\in (0,T)$,
\begin{align*}
    & \int_\Om (b(u)-b(\hat u))^+(t) + \int_0^t\!\!\int_\Om
    (\psi(u)-\psi(\hat u))^+
    \\ & \qquad \leq \int_\Om (b(u_0)-b(\hat u_0))^+
    +\int_0^t\!\!\int_\Om \sign^+(u-\hat u) (f-\hat f).
\end{align*}
(iii) In particular, if $u,\hat u$ are two entropy solutions of
$(P)$, then $b(u)\equiv b(\hat u)$ and $\psi(u)\equiv \psi(\hat
u)$.
\end{theo}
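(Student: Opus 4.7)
The plan is to run the Kruzhkov doubling-of-variables argument in the entropy-process framework, following the schemes of Carrillo--Wittbold and Andreianov--Bendahmane--Karlsen, with two essential modifications imposed by the setting of $(P)$: the $u$-dependence of $\mathfrak{a}$ (treated through $(H_{11})$) and the possible discontinuities of $\mathfrak{a}(\cdot,\xi)\circ\phi^{-1}$ along the exceptional set $\phi(E)$ (treated through $(H_3)$ and Lemma~\ref{SmallSetTruncationLemma}).

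First I would establish a general Kato-type comparison between two entropy process solutions. Let $(\mu,w)$ and $(\hat\mu,\hat w)$ be entropy process solutions on $(t,x,\alpha)$ and $(s,y,\beta)$, with data $(u_0,f)$ and $(\hat u_0,\hat f)$. In \ref{def:entropy2p} applied to $\mu$ with $\eta^+_c$, fix $c=\hat\mu(s,y,\beta)$; symmetrically for $\hat\mu$ with $c=\mu(t,x,\alpha)$. Sum the two, test against $\rho_n(t-s)\rho_n(x-y)\theta(t,x)$ with $\theta\in\D([0,T)\times\Omega)$, $\theta\geq 0$, integrate in $(s,y,\beta)$ and $\alpha$, and pass to the limit $n\to\infty$. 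The time-derivative and convective manipulations are Kruzhkov-standard; the convective term behaves well because ${\mathfrak f}(u)=\tilde{\mathfrak f}(b(u),\psi(u),\phi(u))$ is a continuous function of quantities that will turn out to be $\alpha$-independent. What remains is to control the sign of the diffusion and entropy-dissipation contributions.

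The main obstacle is the diffusion term. After the Carrillo identification of entropy dissipation (via the chain-rule arguments of Lemmas~\ref{lem:WeakWeakOk}, \ref{lem:IBP-measures}) the off-diagonal diffusion contribution localizes to the set $[w=\hat w]$ and takes the form
\[
\iint_{Q_T}\!\!\!\int_0^1\!\!\!\int_0^1 \sgn{\mu-\hat\mu}\bigl(\mathfrak{a}(\mu,\grad w)-\mathfrak{a}(\hat\mu,\grad \hat w)\bigr)\cdot(\grad w-\grad\hat w)\,\theta\,d\alpha d\beta\dx\dt.
\]
By $(H_{11})$, on $[w=\hat w]$ this is bounded below by $-C(\mu,\hat\mu)(1+|\grad w|^p+|\grad\hat w|^p)|\phi(\mu)-\phi(\hat\mu)|$, which vanishes there---\emph{provided} the segment joining $\mu$ and $\hat\mu$ does not cross $E$. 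To enforce this I multiply by a cut-off $1-\chi_\eps(\mu)\chi_\eps(\hat\mu)$, where $\chi_\eps$ indicates an $\eps$-neighborhood of $E$ in the $\phi$-variable (i.e., the preimage of $G^\eps$). The complementary part is handled by Lemma~\ref{SmallSetTruncationLemma} combined with $(H_3)$ on $\meas(G^\eps)$, together with Remark~\ref{rem:NeglectedSet} giving $\mathfrak{a}(u,\grad w)=0$ on $[u\in E]$; its contribution is thus $o(1)$ along a suitable sequence $\eps\dto 0$. Letting $\theta \nearrow \alpha(t)\En_{\Omega}$ and exploiting that the semi-Kruzhkov entropies $\eta^+_c$ with $c\geq 0$ are admissible \emph{up to the boundary} (so the homogeneous Dirichlet condition $\phi(u)=0$ on $\partial\Omega$ absorbs the boundary flux with the right sign), one obtains the Kato inequality
\[
\frac{d}{dt}\int_\Omega(b(u)-b(\hat u))^+ + \int_\Omega\sgn{u-\hat u}(\psi(u)-\psi(\hat u)) \;\leq\; \int_\Omega\sgn{u-\hat u}(f-\hat f)
\]
in $\D'(0,T)$, where $u=\int_0^1\mu\,d\alpha$ and $\hat u=\int_0^1\hat\mu\,d\beta$. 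Integration in $t$, using $b(\mu)|_{t=0}=b(u_0)$ (via the Alt--Luckhaus reading of Remark~\ref{rem:weak-AL-formulation}), yields the contraction claimed in (ii).

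Part (i) then follows by applying this comparison with $(\hat\mu,\hat w):=(\mu,w)$ and $(\hat u_0,\hat f):=(u_0,f)$, re-indexed by a dummy variable $\beta$: the right-hand side vanishes, forcing $b(\mu(\cdot,\alpha))=b(\mu(\cdot,\beta))$ and $\psi(\mu(\cdot,\alpha))=\psi(\mu(\cdot,\beta))$ for a.e.\ $\alpha,\beta$; hence both are $\alpha$-independent and equal $b(u)$, $\psi(u)$. Combined with $\phi(\mu)\equiv w=\phi(u)$ from \ref{def:entropy1p}, this allows substitution into Definition~\ref{def:Entropy-Solutionp} (using Remark~\ref{RemTwoFormsOfDiffusion} to rewrite the diffusion term in terms of $u$), producing the entropy-solution conditions of Definition~\ref{def:Entropy-Solution} for $u$. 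Strict monotonicity of $b+\phi+\psi$, when assumed, upgrades the three equalities to $\mu(t,x,\alpha)=u(t,x)$ a.e. Finally, (iii) is the special case $u_0=\hat u_0$, $f=\hat f$ of (ii).
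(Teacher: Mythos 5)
Your proposal follows essentially the same route as the paper: Carrillo--Wittbold doubling in the entropy-process framework, with $(H_{11})$ supplying the monotonicity defect and Lemma~\ref{SmallSetTruncationLemma} together with $(H_3)$ controlling the bad set near $\phi(E)$; parts (i) and (iii) are then derived from the contraction inequality exactly as you describe. The one place where your write-up is imprecise is the treatment of the diffusion dissipation: you describe the limiting quantity with a factor $\sign^+(\mu-\hat\mu)$ already in place and then argue via the cut-off $1-\chi_\eps(\mu)\chi_\eps(\hat\mu)$, but the cut-off alone does not ``enforce'' that the segment $[\mu,\hat\mu]$ avoids $E$ --- if exactly one of the pair lies in $E^\eps$ the cut-off is $1$ yet the segment may still cross $E$. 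What actually closes the argument is the interaction between the cut-off and the $\tfrac{1}{\eps}\char_{[0<w-\hat w<\eps]}$ factor coming from $\bigl(\sign^+_\eps\bigr)'$: when the segment crosses $G$ and at least one of $w,\hat w$ is outside $G^\eps$, one has $|w-\hat w|\ge\eps$ and the strip indicator kills the term. The paper makes this explicit by partitioning $[w\notin G]\times[\hat w\notin G]$ into the three cases $S_a$ (same component, use $(H_{11})$), $S_b$ (both in $G^\eps\setminus G$, use Lemma~\ref{SmallSetTruncationLemma} and $(H_3)$), and $S_c$ (strip indicator vanishes); your cut-off decomposition is equivalent to this once the strip localization is retained rather than sent to the limit first.
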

\begin{rem}\label{rem:comparaison}
 In Theorem~\ref{th:EPSunique}(ii), on can replace $u$, resp.
$\hat u$, with an entropy subsolution, resp. with an entropy
super-solution. The same proof applies.
\end{rem}
The following continuous dependence property is the central result
of this paper.
\begin{theo}\label{th:ContDep}
Let $(b_n,\psi_n,\phi_n,\mathfrak{a}_n,\tilde{\mathfrak{f}}_n;u^n_0,f_n)$, $n\in\N$, be
a sequence converging to $(b,\psi,\phi,\mathfrak{a},\tilde{\mathfrak{f}};u_0,f)$ in the
following sense:
\begin{equation}\label{eq:converg-data}
\begin{array}{ll}
\cdot & b_n,\psi_n,\phi_n \text{~converge pointwise
to~} b,\psi,\phi \text{~respectively};\\
\cdot & \tilde{\mathfrak{f}}_n,\mathfrak{a}_n \text{~converge to~}
\tilde{\mathfrak{f}},\mathfrak{a}, \text{~respectively,
uniformly on compacts};\\
 \cdot & b_n(u^n_0)\to b(u_0) \text{ in } L^1(\Om), \text{~and~} f_n\to f
 \text{ in } L^1(Q_T).
\end{array}
\end{equation}
Assume that $(b,\psi,\phi,\mathfrak{a},\tilde{\mathfrak{f}};u_0,f)$ and
$(b_n,\psi_n,\phi_n,\mathfrak{a}_n,\tilde{\mathfrak{f}}_n;u^n_0,f_n)$ (for each $n$) satisfy
the hypotheses $(H_1)$, $(H_4)$, $(H_5)$, and $(H_6)$-$(H_{11})$, and, moreover, that
the functions $C(\cdot)$ in $(H_9)$, $(H_{10})$,  and $(H_{11})$
as well as the $L^\infty(\Om)$ and $L^1(0,T,L^\infty(\Om))$
bounds in $(H_4)$ are independent of $n$.
We denote by $(P_n)$ the analogue of  problem $(P)$ corresponding
to the data and coefficients
$(b_n,\psi_n,\phi_n,\mathfrak{a}_n,\tilde{\mathfrak{f}}_n;u^n_0,f_n)$.

Assume that either $b(\R)=\R$, or the $L^\infty(Q_T)$ bounds on
$f_n^\pm$ in $(H_5)$ are independent of $n$. Assume that the
structure condition $(H_{str})$ holds, and $\phi$ satisfies the
technical hypotheses $(H_2)$,$(H_3)$.

Let $u_n$ be an entropy solution of problem $(P_n)$. Then the functions
$u_n$ converge to an entropy solution $u$ of $(P)$ in $L^\infty(Q_T)$ weakly-*, up to a
subsequence. Furthermore, the functions $\phi_n(u_n)$ converge to $\phi(u)$ in
$L^1(Q_T)$ up to a subsequence, and the whole sequences
$b_n(u_n),\psi_n(u_n)$ converge in $L^1(Q_T)$ to $b(u),\psi(u)$, respectively.
\end{theo}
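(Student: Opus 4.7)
The plan is to follow the classical compactness-and-limit-passage scheme for degenerate parabolic equations: derive uniform a priori bounds, extract subsequences (including a process-function limit for $u_n$), pass to the limit in the weak formulation and in the entropy inequalities of $(P_n)$, identify the nonlinear diffusion limit via a Minty-Browder argument, and finally invoke Theorem~\ref{th:EPSunique}(i) to upgrade the entropy process solution so obtained into an entropy solution. The uniqueness of $b(u),\psi(u)$ then promotes the subsequential convergence to whole-sequence convergence for $b_n(u_n),\psi_n(u_n)$.

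First I would establish the uniform a priori estimates. The $L^\infty(Q_T)$ bound on $u_n$ follows from the comparison principle contained in Theorem~\ref{th:EPSunique}(ii) applied to $(P_n)$ against constant super/sub-solutions, using the uniformity in $n$ of the bounds in $(H_4)$ and, when $b(\R)\neq \R$, of those in $(H_5)$. Using the Alt-Luckhaus form of the equation (Remark~\ref{rem:weak-AL-formulation}) and testing with $\phi_n(u_n)$, the coercivity $(H_9)$ yields a uniform bound of $w_n:=\phi_n(u_n)$ in $L^p(0,T;W^{1,p}_0(\Om))$, while $(H_{10})$ then bounds $\mathfrak{a}_n(u_n,\grad w_n)$ in $L^{p'}(Q_T)$ and $\pt b_n(u_n)$ in $L^{p'}(0,T;W^{-1,p'}(\Om))+L^1(Q_T)$.

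Next, by the nonlinear weak-$*$ compactness property \eqref{nonlinweakstar}, extract a subsequence along which $F(u_n)\weakstar \int_0^1 F(\mu(\cdot,\alpha))\,d\alpha$ in $L^\infty$ weak-$*$ for every $F\in C(\R)$, with $\mu\in L^\infty(Q_T\times(0,1))$. Since $b_n,\psi_n,\phi_n$ are monotone and converge pointwise, they converge uniformly on compacts, so $b_n(u_n),\psi_n(u_n),\phi_n(u_n)$ have nonlinear weak-$*$ limits given by the corresponding expressions in $\mu$. Simultaneously $w_n\weak w$ in $L^p(0,T;W^{1,p}_0(\Om))$ and $\mathfrak{a}_n(u_n,\grad w_n)\weak \chi$ in $L^{p'}$. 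Under $(H_{str})$, $\phi$ factors through $b$, so the Alt-Luckhaus compactness argument combined with the time-derivative bound on $b_n(u_n)$ yields strong $L^p(Q_T)$ convergence of $w_n$ to $w$; consistency $\phi(\mu(t,x,\alpha))\equiv w(t,x)$ follows. Passing to the limit in the convection and source terms is routine: $\tilde{\mathfrak{f}}_n$ converges uniformly on compacts, and by Lemma~\ref{lem:WeakWeakOk} the nonlinear weak-$*$ convergence suffices to pass to the limit in $\tilde{\mathfrak{f}}_n(b_n(u_n),\psi_n(u_n),\phi_n(u_n))$; similarly for $\psi_n(u_n)$ and for the $(\eta^\pm_c)'(u_n)$ factors multiplying $\psi_n(u_n)$ and $f_n$ in the entropy inequalities. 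The chain-rule tool of Lemma~\ref{lem:IBP-measures} is used to separate the convection-limit argument from the diffusion-limit argument.

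The main obstacle is the identification $\chi = \mathfrak{a}(u,\grad w)$ in the sense required by Definitions~\ref{def:entropyWeakp},\ref{def:entropy2p}. I would proceed by a localized Minty-Browder argument, using cut-off functions $\zeta_\delta(w_n)$ that vanish on a $\delta$-neighborhood of $\phi(E)$: on the complement, $\phi^{-1}$ is continuous, so the strong convergence $w_n\to w$ implies $u_n=\phi_n^{-1}(w_n)\to \phi^{-1}(w)=u$ pointwise a.e., and the monotonicity inequality $(\mathfrak{a}_n(u_n,\grad w_n)-\mathfrak{a}_n(u_n,\grad v))\cdot(\grad w_n-\grad v)\geq 0$, tested against smooth $v$ approximating $w$, combined with $(H_{11})$, yields $\chi=\mathfrak{a}(u,\grad w)$ on that set. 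The contribution from the $\delta$-neighborhood is controlled, uniformly in $n$, by $\meas(G^\delta)/\delta$ times the uniform $L^p$-bound on $\grad w_n$, thanks to Lemma~\ref{SmallSetTruncationLemma}; hypothesis $(H_3)$ then ensures this contribution can be sent to zero along a subsequence $\delta\downarrow 0$. On the limiting set $[w\in \phi(E)]$ itself, Remark~\ref{rem:NeglectedSet} combined with $\mathfrak{a}(r,0)=0$ gives $\mathfrak{a}(u,\grad w)=0=\chi$, closing the identification. The pair $(\mu,w)$ is therefore an entropy process solution of $(P)$; Theorem~\ref{th:EPSunique}(i) identifies it with a bona fide entropy solution $u=\int_0^1\mu\,d\alpha$ and gives $b(\mu)\equiv b(u)$, $\psi(\mu)\equiv \psi(u)$. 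The latter implies that the nonlinear weak-$*$ limits of $b_n(u_n),\psi_n(u_n)$ are $\alpha$-independent, hence the convergences upgrade to strong $L^1(Q_T)$ convergence and, by uniqueness of $b(u),\psi(u)$ through Theorem~\ref{th:EPSunique}(iii), they hold for the whole sequence. The strong $L^1$ convergence of $\phi_n(u_n)$ to $\phi(u)$ is already built in through the strong $L^p$ convergence of $w_n$.
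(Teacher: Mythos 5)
Your proposal follows the paper's own proof essentially step for step: uniform a priori bounds (Lemmas~\ref{lem:apriori-estimates} and~\ref{SmallSetTruncationLemma}), nonlinear weak-$\star$ compactness to a process function, strong $L^1$ convergence of $w_n=\phi_n(u_n)$ via the translation estimate powered by $(H_{str})$, a localized Minty--Browder identification of $\chi$ on $[u\notin E]$ together with the small-set control from $(H_3)$ and the vanishing of $\chi$ on $[u\in E]$, the chain-rule lemmas to decouple convection from diffusion, and the upgrade to an entropy solution via Theorem~\ref{th:EPSunique}. Two minor imprecisions worth correcting: (i) you write ``$u_n=\phi_n^{-1}(w_n)$,'' but $\phi_n$ need not be invertible on its flat parts, so the a.e.\ convergence $u_n\to u$ on $[u\in I]$ for $I\subset\R\setminus E$ requires the more careful argument of Lemma~\ref{lem:PointwiseConvI}; (ii) in the Minty step the relevant ingredients are the monotonicity $(H_8)$, the growth bound $(H_{10})$, and the locally uniform convergence $\mathfrak a_n\to\mathfrak a$ on compacts of $I$, not $(H_{11})$, which is the structural hypothesis used in the uniqueness proof behind Theorem~\ref{th:EPSunique}, not in Lemma~\ref{lem:MintyTruncated}. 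Neither of these affects the validity of the overall scheme.
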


\begin{rem}\label{rem:SimplerAssumptions-ContDep}
The structure condition $(H_{str})$ seems to be essential for results
like Theorem \ref{th:ContDep} to hold (see \cite{BenilanWittbold}).

The surjectivity assumption on $b$ or assumption $(H_5)$ were only
required to ensure the $L^\infty$ estimate on $u$. One
could work with unbounded solutions, in which case these
assumptions can be replaced by a growth condition on
$\tilde{\mathfrak{f}}$ (see Remark~\ref{rem:bounded-b} below).

Notice that assuming simultaneously $(H_{str})$, $b(\R)=\R$
and $\psi\equiv 0$, by a change of the unknown $u$
into $v=b(u)$ we can always reduce the triply
nonlinear problem $(P)$ to the doubly nonlinear problem with
$b\equiv \text{Id}$.
\end{rem}
Finally, we state the well-posedness result for $(P)$. Note that
when only the data $(u_0,f)$ are perturbed, the
continuous dependence result analogous to Theorem~\ref{th:ContDep}
holds under the structure assumption $(H'_{str})$ which is weaker
than $(H_{str})$.
\begin{theo}\label{th:EPS-ESexists}~\\
(i) Assume that $(H_1)$-$(H_5)$ and $(H_6)$-$(H_{11})$ hold. Then
there exists an entropy solution to $(P)$.
Moreover, it is unique, in the sense of Theorem~\ref{th:EPSunique}(iii).\\[3pt]
(ii) Assume in addition that the structure condition $(H'_{str})$
holds. Then the entropy solution of $(P)$ depends continuously on
the data $(u_0,f)$. More precisely, let $u_n$ be an entropy
solution of $(P)$ with data $(u_0^n,f_n)$. Assume $b(u^n_0)\to
b(u_0)$ in $L^1(\Om)$ and $f_n\to f$ in $L^1(Q_T)$, as $n\to
\infty$. Assume that  the bounds in $(H_4)$,$(H_5)$ are uniform in
the sense
\begin{itemize}
\item $\|u^n_0\|_{L^\infty(\Om)}\leq \Const{}$;\\[-5pt]

\item either $b(+\infty)=+\infty$ and  $\dsp\int_0^T \|f_n^+(t,\cdot)\|_{L^\infty(\Om)}\,dt\leq
\Const{}$, or $\psi(+\infty)=+\infty$ and $\|f^+_n\|_{L^\infty(Q_T)}\leq \Const{}$;\\[-5pt]

\item either $b(-\infty)=-\infty$ and  $\dsp\int_0^T
\|f_n^-(t,\cdot)\|_{L^\infty(\Om)}\,dt \leq \Const{}$, or
$\psi(-\infty)=-\infty$ and $\|f^-_n\|_{L^\infty(Q_T)}\leq
\Const{}$.
\end{itemize}
Then $b(u_n),\psi(u_n)$ and $\phi(u_n)$ converge, respectively, to
$b(u),\psi(u)$ and $\phi(u)$ in $L^1(Q_T)$ as $n\to \infty$, where
$u$ is an entropy solution of $(P)$ with data $(u_0,f)$.

Moreover, if we reinforce hypothesis $(H_8)$ by requiring
the uniform monotonicity of $\mathfrak{a}(r,\cdot)$ in the sense
$$
\leqno (H'_8) \qquad \left|
\begin{array}{l}
\text{there exists $C\in C( \R^2;\R^+)$ such that }\\
(\mathfrak a(r,\xi)-\mathfrak a(r,\eta))\cdot(\xi-\eta)\geq
1/C(r,\frac 1{|\xi-\eta|}),
\end{array} \right.
$$
then also  $\grad \phi(u_n)$ converge to $\grad
\phi(u)$ in $(L^p(Q_T))^N$ and $\mathfrak{a}(u_n,\grad\phi(u_n))$ converge to
 $\mathfrak{a}(u,\grad\phi(u))$ in $(L^{p'}(Q_T))^N$, as $n\to \infty$.
\end{theo}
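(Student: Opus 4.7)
The uniqueness statement in (i) is immediate from Theorem~\ref{th:EPSunique}(iii), so the real work is existence and continuous dependence. My strategy for existence in (i) is to perturb $b$ into $b_\eps:=b+\eps\,\mathrm{Id}$ ($\eps>0$): then $b_\eps$ is bijective, so the structure condition $(H_{str})$ holds trivially for the triple $(b_\eps,\psi,\phi)$, and the $L^\infty$ estimates are preserved since $b_\eps(\pm\infty)=\pm\infty$. The substitution $v=b_\eps(u)$ reduces $(P_\eps)$ to the doubly nonlinear case $\widetilde{b}=\mathrm{Id}$, for which the existence of entropy solutions $u_\eps$ is classical.

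Next I would push through the a priori estimates of Section~\ref{sec:APrioriEstimates}, uniformly in $\eps$: an $L^\infty(Q_T)$ bound on $u_\eps$ by comparison with constants, an $L^p(0,T;W^{1,p}_0(\Om))$ bound on $\phi(u_\eps)$ obtained from the coercivity $(H_9)$ upon testing with $\phi(u_\eps)$, and a bound on $\pt b_\eps(u_\eps)$ in $L^{p'}(0,T;W^{-1,p'}(\Om))+L^1(Q_T)$ from the weak formulation. An Alt--Luckhaus-type time-translate argument then yields relative compactness of $b_\eps(u_\eps)$ (and similarly of $\psi(u_\eps)$) in $L^1(Q_T)$, while $\phi(u_\eps)\weak w$ weakly in $L^p(0,T;W^{1,p}_0(\Om))$. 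Extracting a process limit $\mu$ via the nonlinear weak-$\star$ property~\eqref{nonlinweakstar}, the strong $L^1$ compactness forces $b(\mu)$ and $\psi(\mu)$ to be independent of $\alpha$. Passing to the limit in the weak and entropy formulations of $(P_\eps)$, the only delicate term is the nonlinear diffusion $\mathfrak{a}(u_\eps,\grad\phi(u_\eps))$, which I would handle by the Minty--Browder trick combined with the cut-off machinery of Lemmas~\ref{SmallSetTruncationLemma},\ref{lem:WeakWeakOk}: the bad set $\{u_\eps\in E\}$ contributes nothing since $\grad\phi(u_\eps)=0$ there, while $(H_3)$ controls a small neighborhood where $\mathfrak{a}$ might be discontinuous. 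One concludes that $(\mu,w)$ is an entropy process solution, and Theorem~\ref{th:EPSunique}(i) delivers an entropy solution $u$ of $(P)$.

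For (ii), the same scheme is applied directly to the sequence $u_n$: the uniform bounds on the data yield the same compactness for $b(u_n),\psi(u_n),\phi(u_n)$ as above. The new ingredient is the $L^1$-contraction of Theorem~\ref{th:EPSunique}(ii) applied between $u_n$ and $u_m$, which shows that $b(u_n)+\psi(u_n)$ is Cauchy in $L^1(Q_T)$; the structure condition $(H'_{str})$ makes $\phi$ factor through $b+\psi$ as a continuous nondecreasing function, so $\phi(u_n)$ is Cauchy in $L^1(Q_T)$ too, with limit $\phi(u)$ where $u$ is an entropy solution of the limit problem built as in~(i). Uniqueness from Theorem~\ref{th:EPSunique}(iii) upgrades the subsequential convergence of $b(u_n),\psi(u_n)$ to convergence of the whole sequences.

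Under the reinforced $(H'_8)$, the classical Minty--Browder procedure gives strong convergence of the gradients: testing the weak formulations of $(P_n)$ and $(P)$ with $\phi(u_n)-\phi(u)$ and using the convergences already obtained, one gets
\[
\iint_{Q_T}\bigl(\mathfrak{a}(u_n,\grad\phi(u_n))-\mathfrak{a}(u_n,\grad\phi(u))\bigr)\cdot\bigl(\grad\phi(u_n)-\grad\phi(u)\bigr)\dx\dt\to 0;
\]
uniform monotonicity then forces $\grad\phi(u_n)\to\grad\phi(u)$ in $L^p(Q_T)^N$, and the growth bound $(H_{10})$ combined with the continuity of $\mathfrak{a}$ gives $L^{p'}$ convergence of the fluxes. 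The main obstacle throughout is identifying the nonlinear diffusion term in the limit: in the absence of the stronger $(H_{str})$, the potential discontinuity of $\mathfrak{a}(\cdot,\xi)$ on $\phi(E)$ must be neutralized by the small-neighborhood estimate $(H_3)$ together with the chain-rule identity of Lemma~\ref{lem:WeakWeakOk}, which is the conceptual heart of the whole construction.
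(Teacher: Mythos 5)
Your proposal for part (ii) and for the gradient convergence is in the right spirit and matches the paper's argument: once existence for the limit data is available, the $L^1$-contraction of Theorem~\ref{th:EPSunique}(ii) applied between $u_n$ and $u$ yields $b(u_n)\to b(u)$ and $\psi(u_n)\to\psi(u)$ in $L^1(Q_T)$, the structure condition $(H'_{str})$ together with the uniform $L^\infty$ bound upgrades this to $\phi(u_n)\to\phi(u)$ in $L^1(Q_T)$, and then the Minty--Browder scheme with $(H'_8)$ gives the $(L^p)^N$ convergence of the gradients. The only inaccuracy there is that you jump directly to the inequality involving $\mathfrak a(u_n,\grad\phi(u))$; to reach it from the weak formulations one first identifies $\chi=\mathfrak a(u,\grad\phi(u))$, then uses the a.e.\ convergence $u_n\to u$ on $[u\notin E]$ (Lemma~\ref{lem:PointwiseConvI}) to replace $\mathfrak a(u,\cdot)$ by $\mathfrak a(u_n,\cdot)$. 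That step is in the paper and is needed; you elide it.

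The real gap is in part (i), in the case where $(H_{str})$ fails. Your strategy is to perturb $b$ into $b_\eps:=b+\eps\,\mathrm{Id}$, observe that $(H_{str})$ then holds trivially for $(b_\eps,\psi,\phi)$, and deduce compactness via an Alt--Luckhaus-type time-translate estimate. This does not work. The time-translate machinery of Lemma~\ref{lem:apriori-estimates}(iii) gives a uniform bound on the \emph{product}
$\iint_{Q_{T-\Del}}|b_\eps(u_\eps)(t+\Del)-b_\eps(u_\eps)(t)|\,|\phi(u_\eps)(t+\Del)-\phi(u_\eps)(t)|$,
not on either factor alone. To extract $L^1$ compactness of $\phi(u_\eps)$ from it, one composes with a modulus of continuity $\pi_\eps$ of $\widetilde\phi_\eps:=\phi\circ b_\eps^{-1}$. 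When $(H_{str})$ fails for the limit pair $(b,\phi)$, i.e.\ $b$ is constant on some interval $[a_1,a_2]$ where $\phi$ is not, the function $\widetilde\phi_\eps$ maps an interval of length $\eps(a_2-a_1)$ onto the fixed interval $[\phi(a_1),\phi(a_2)]$; hence $\pi_\eps(\eps(a_2-a_1))\geq\phi(a_2)-\phi(a_1)>0$, and the resulting bound $d\,\overline\pi_\eps(d\Del)$ is not $o(1)$ uniformly in $\eps$. So you do not get strong compactness of $\phi(u_\eps)$, and your explicit claim of strong compactness of $b_\eps(u_\eps)$ is not what this estimate produces at all (it would need the reverse structure condition $\phi(r)=\phi(s)\Rightarrow b(r)=b(s)$). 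Moreover, settling for merely weak convergence of $\phi(u_\eps)$ is fatal for the identification of the diffusion limit: Lemmas~\ref{lem:PointwiseConvI}, \ref{lem:IdentOnE}, \ref{lem:MintyTruncated} all hinge on $\phi_n(u_n)\to\phi(u)$ a.e., which only comes from strong $L^1$ compactness. The paper's route around this is genuinely different and not replaceable by your perturbation: it follows Ammar--Wittbold, first proving existence under $(H_{str})$ via Theorem~\ref{th:ContDep}, then perturbing $\psi$ into $\psi_{m,n}=\psi+\frac1n\mathrm{Id}^++\frac1m\mathrm{Id}^-$ (so that $\psi_{m,n}^{-1}$ is Lipschitz), using the nonlinear semigroup construction and its $L^1$-equicontinuity in time to pass $k\to\infty$, and finally using the comparison principle to obtain a \emph{bi-monotone} family $u_{m,n}$, whose monotone pointwise limit dispenses with any structure-dependent compactness when letting $n,m\to\infty$. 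Without replacing the weak-compactness step by this (or an equivalent) argument, your proof of part (i) does not close.
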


\section{Notation and preliminary lemmas}\label{sec:MainLemmas}

Let us give some notation which will
be used throughout the remaining sections.

We will use the notation like $\Bigl[u\in F\Bigr]$ for the sets
like $\{(t,x)\in Q_T\,|\,u(t,x)\in F\}$. For a  measurable set
$H$, we denote by $\char_{H}$ the characteristic function of $H$.
For $H\subset\R$, we set
$$
T_H(z):=\dsp\int_0^z \char_{H}(s)\,ds;
$$
clearly, $T_H(\cdot)$ is a Lipschitz function with $T_H(0)=0$.

We denote by $G$ the image $\phi(E)$ by $\phi(\cdot)$ of the
``exceptional set'' $E$; recall that $E$ is closed, $G$ is closed
and $\meas(G)=0$. We denote by $I$ a generic open interval in
$\R\setminus E$, and by $J$ it image $\phi(I)$ which is a generic
open interval in $\R\setminus G$. For all $\eps>0$, we choose an
open set $G_\eps\supset G$ such that
$\meas(G_\eps)<\Const{}\times\eps$. We denote by $E_\eps$ the open
set $\phi^{-1}(G_\eps)$ which contains $E$.
 When $(H_3)$ holds, we
can simply take $G_\eps=G^\eps:=\{z\in \R\,|\, \dist(z,G)<\eps\}$.

Now let us prove the representation property used in Definition~\ref{AThetaDef}.
%
\begin{lem}\label{FunctionalDepLemma}
Let $\varphi_\theta(\cdot)$ be the function defined by
$$
\dsp \varphi_\theta:z\in\R \mapsto \int_0^z \theta(s)\,d\varphi(s),
$$
for a  continuous non-decreasing function $\varphi:\R\to\R$ and a bounded piecewise
continuous function $\theta:\R\to\R^n$. Then there exists a
Lipschitz continuous function $\widetilde \varphi_\theta:\varphi(\R)\to \R^n$
such that for all $z\in \R$,
$$
\varphi_\theta(z)=\widetilde \varphi_\theta(\varphi(z)).
$$
\end{lem}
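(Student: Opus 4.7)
The plan is to exploit the fact that the Lebesgue--Stieltjes measure $d\varphi$ associated to the continuous nondecreasing $\varphi$ assigns zero mass to any interval on which $\varphi$ is constant; this is exactly what allows $\varphi_\theta$ to factor through $\varphi$.

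First I would verify well-definedness. Suppose $z_1 < z_2$ with $\varphi(z_1) = \varphi(z_2)$. Since $\varphi$ is nondecreasing and continuous, it is constant on $[z_1,z_2]$, so the measure $d\varphi$ satisfies $d\varphi((z_1,z_2]) = \varphi(z_2) - \varphi(z_1) = 0$. Because $\theta$ is bounded and Borel measurable, the Lebesgue--Stieltjes integral gives
\[
\varphi_\theta(z_2) - \varphi_\theta(z_1) = \int_{(z_1,z_2]} \theta(s)\,d\varphi(s) = 0.
\]
Therefore the prescription $\widetilde\varphi_\theta(y) := \varphi_\theta(z)$ for any preimage $z \in \varphi^{-1}(y)$ defines a single-valued map on $\varphi(\R)$ with $\varphi_\theta = \widetilde\varphi_\theta \circ \varphi$.

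Next I would establish the Lipschitz estimate. For arbitrary $z_1 < z_2$, the general bound
\[
\bigl|\varphi_\theta(z_2) - \varphi_\theta(z_1)\bigr|
= \Bigl|\int_{(z_1,z_2]} \theta(s)\,d\varphi(s)\Bigr|
\le \|\theta\|_\infty\,\bigl(\varphi(z_2) - \varphi(z_1)\bigr)
\]
translates, via the factorization, into
$\bigl|\widetilde\varphi_\theta(y_2) - \widetilde\varphi_\theta(y_1)\bigr| \le \|\theta\|_\infty\,|y_2 - y_1|$
for every $y_1,y_2 \in \varphi(\R)$. Thus $\widetilde\varphi_\theta$ is Lipschitz on $\varphi(\R)$ with constant $\|\theta\|_\infty$; if needed it can be extended to all of $\R$ with the same constant by McShane's extension (or simply by defining it as constant on each connected component of $\R \setminus \varphi(\R)$, consistent with the boundary values).

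The only mildly delicate step is the first one: handling the Stieltjes integral rigorously for a merely piecewise continuous $\theta$ rather than a continuous one. I would dispose of this by appealing directly to the Lebesgue--Stieltjes framework (the integral is well-defined for bounded Borel functions against the Radon measure $d\varphi$), which immediately yields both the vanishing on constancy intervals and the uniform estimate used above. Everything else is bookkeeping, and the $\R^n$-valued case follows componentwise.
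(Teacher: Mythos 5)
Your proof is correct and follows essentially the same two-step argument as the paper: well-definedness from the vanishing of the Stieltjes measure $d\varphi$ on level sets of $\varphi$, and the Lipschitz bound via $\|\theta\|_{L^\infty}$. Your remarks on the Lebesgue--Stieltjes framework for piecewise continuous $\theta$ and the optional McShane extension beyond $\varphi(\R)$ are sound but merely make explicit what the paper leaves implicit.
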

\begin{proof}
 If $\varphi(z)=\varphi(\hat z)$, then the measure
$d\varphi(s)$ vanishes between $z$ and $\hat z$; thus
$\varphi_\theta(z)-\varphi_\theta(\hat z)=\dsp\int_{\hat z}^{z}
 \theta(s)\,d\varphi(s)$ is zero. Therefore $\widetilde \varphi_\theta$ is well defined.
 For all $r,\hat r\in \varphi(\R)$,
 $\widetilde \varphi_\theta(r)-\widetilde \varphi_\theta(\hat r)=\varphi_\theta(z)-\varphi_\theta(\hat z)=\dsp\int_{\hat
 z}^z \theta(s)\,d\varphi(s)$, where $z\in \varphi^{-1}(r)$, $\hat z \in \varphi^{-1}(\hat
 r)$. Thus $$|\widetilde \varphi_\theta(r)-\widetilde \varphi_\theta(\hat
 r)|\leq
  \|\theta\|_{L^\infty}\;|\varphi(z)-\varphi(\hat z)|= \|\theta\|_{L^\infty}\;|r-\hat
  r|.\\[-15pt]$$
\end{proof}

Now, let us give a localized estimate of the gradient of $w=\phi(u)$.
\begin{lem}\label{SmallSetTruncationLemma}
Let $u$ be a bounded weak solution of $(P)$. Then there exists a
constant $C$ depending on $C(\cdot)$ in $(H_9)$, on
$\|u\|_{L^\infty(Q_T)}$ and on $\|b(u_0)\|_{L^1(\Om)}$,
$\|f\|_{L^1(Q)}$ such that for all Borel measurable set $F\subset
\R$,
\begin{equation}\label{eq:LocalEstimate}
    \begin{split}
        I_F(u) & :=\iint_{\Bigl[u\in F\Bigr]} |\grad
        \phi(u)|^{p}=\iint_{\Bigl[w\in \phi(F)\Bigr]} |\grad w|^{p}
        \\ & \leq C\, \text{\rm Var}_{F} \phi(\cdot)=C \,\meas (\phi(F)).
    \end{split}
\end{equation}
\end{lem}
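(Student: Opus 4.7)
The idea is to use $\zeta = T_{\phi(F)}(w)$ as a test function in the weak formulation of $(P)$ from Remark~\ref{rem:weak-AL-formulation}. Since $T_{\phi(F)}$ is Lipschitz with $T_{\phi(F)}(0)=0$ and $\|T_{\phi(F)}\|_{L^\infty(\R)}\le\meas(\phi(F))$, this $\zeta$ belongs to $L^p(0,T;W^{1,p}_0(\Om))\cap L^\infty(Q_T)$ with $\grad\zeta = \chi_{\phi(F)}(w)\grad w$. Before estimating, I note that the two forms of $I_F(u)$ agree: on the symmetric difference of $[u\in F]$ and $[w\in\phi(F)]$ one necessarily has $w\in G:=\phi(E)$ (by $(H_2)$ and monotonicity of $\phi$, two distinct values $u,\tilde u$ with $\phi(u)=\phi(\tilde u)$ force $\phi$ to be constant on the segment between them, hence that segment, being dense in itself, lies in the closed set $E$); and $\meas(G)=0$ then forces $\grad w=0$ a.e.~on $[w\in G]$ by the Sobolev chain rule.

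I handle each term in the tested weak formulation separately. For the time derivative, the Alt--Luckhaus chain rule (after a standard cutoff $t\in(0,t_0)$, then $t_0\up T$) yields
\[
  \int_0^{t_0}\langle\pt b(u),\zeta\rangle\dt = \int_\Om \Psi^F(u(t_0))\dx - \int_\Om \Psi^F(u_0)\dx,\qquad \Psi^F(z):=\int_0^z T_{\phi(F)}(\phi(s))\,db(s).
\]
Since $T_{\phi(F)}(\phi(s))$ shares the sign of $s$, as does $db(s)$, one has $\Psi^F\ge 0$ and $|\Psi^F(z)|\le\meas(\phi(F))\,|b(z)|$. Coercivity $(H_9)$ gives the diffusion lower bound
\[
  \iint_{Q_{t_0}}\mathfrak a(u,\grad w)\cdot\grad\zeta \ge \frac{1}{C^*}\iint_{Q_{t_0}}|\grad w|^p\chi_{\phi(F)}(w),\qquad C^*:=\max_{|r|\le\|u\|_{L^\infty}}C(r).
\]
The zero-order term $\iint\psi(u)\zeta$ is non-negative (the functions $\psi(u)$, $u$, $w$ all share the same sign), while the source is bounded crudely by $\meas(\phi(F))\,\|f\|_{L^1(Q_T)}$.

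The delicate piece is the convection term, which I claim vanishes. Define
\[
  \vec H(z):=\int_0^z\tilde{\mathfrak f}(b(s),\psi(s),\phi(s))\,\chi_{\phi(F)}(\phi(s))\,d\phi(s).
\]
Lemma~\ref{FunctionalDepLemma} yields $\vec H(u)=\widetilde{\vec H}(w)$ for a Lipschitz $\widetilde{\vec H}:\phi(\R)\to\R^N$ with $\widetilde{\vec H}(0)=0$; hence $\vec H(u)\in L^p(0,T;W^{1,p}_0(\Om;\R^N))$. A chain-rule computation (using $\grad w=0$ a.e.~on $[w\in G]$ to remove the ambiguity between $u$ and $\phi^{-1}(w)$) gives $\div\vec H(u)=\tilde{\mathfrak f}(b(u),\psi(u),\phi(u))\cdot\grad\zeta$ a.e.; integrating and using the vanishing trace of $\vec H(u)$ on $\partial\Om$, the convection term is zero. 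Combining the four estimates, dropping the non-negative $\Psi^F(u(t_0))$ and $\psi$-contribution, and letting $t_0\up T$, I obtain
\[
  I_F(u)\le C^*\bigl(\|b(u_0)\|_{L^1(\Om)}+\|f\|_{L^1(Q_T)}\bigr)\,\meas(\phi(F)),
\]
which is the claim. The main technical obstacle is the rigorous identification $\div\vec H(u)=\tilde{\mathfrak f}\cdot\grad\zeta$: it rests on hypothesis $(H_2)$ (null measure of $\phi(E)$) to reconstruct $u$ from $w$ modulo negligible sets, exactly as in Remark~\ref{rem:NeglectedSet}.
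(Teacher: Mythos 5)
Your proposal follows essentially the same route as the paper: test with $T_{\phi(F)}(w)$, apply the Alt--Luckhaus chain rule for the time derivative, use the coercivity $(H_9)$ for the diffusion term, and kill the convection term via Lemma~\ref{FunctionalDepLemma} together with the Green--Gauss formula and the trace $w|_{\partial\Om}=0$. The one step the paper treats more carefully is the WLOG reduction to $F$ bounded (via $F^M := F\cap[-M,M]$ and $M\uparrow\infty$), needed so that $\tilde{\mathfrak f}(b(\cdot),\psi(\cdot),\phi(\cdot))\,\chi_{\phi(F)}(\phi(\cdot))$ is bounded and Lemma~\ref{FunctionalDepLemma} applies; since $u\in L^\infty(Q_T)$ this is a cosmetic matter, but you should state it.
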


\begin{proof}
Without loss of restriction, one can assume that  $F$ is bounded;
indeed, otherwise we can replace $F$ with $F^M:=F\cap [-M,M]$ and
then pass to the limit as $M\to+\infty$ in
inequality \eqref{eq:LocalEstimate} written for $F^M$.

Set $H=\phi(F)$, and note that $T_H(w)=T_H(\phi(u))\in L^p(0,T;W^{1,p}_0(\Om))$ can be
approximated by admissible test functions in \eqref{eq:weakcond} of
Definition \ref{def:Entropy-Solution}; one has
$$
\grad T_H(w)=\grad w \char_{H}(w)
=\grad \phi(u) \char_{F}(u),
$$
and
\begin{equation}\label{EstTalpha}
\|T_H(\cdot)\|_\infty \leq \int_{F} d\phi(s)
=\text{\rm Var}_{F}
\phi(\cdot)=\meas (H).
\end{equation}

Using this test function, with
Remark~\ref{rem:weak-AL-formulation} and the standard chain rule
argument known as the Mignot-Bamberger and Alt-Luckhaus formula
(see, e.g., Alt and Luckhaus \cite{AltLuckhaus}, Otto
\cite{Otto:L1_Contr}, Carrillo and Wittbold
\cite{CarrilloWittbold}) we get
\begin{equation}\label{EstWithTalpha}
\begin{array}{l}
\dsp \int_\Om B_F(u)(T,\cdot)+ \iint_{Q_T} \psi(u)T_H(\phi(u)) +
\iint_{Q_T} \mathfrak a(u,\grad w)\cdot \grad
T_H(w) \\[5pt]
\dsp \qquad\qquad\qquad =\int_\Om B_F(u_0)+ \iint_{Q_T} f T_H(w) -
\iint_{Q_T} {\mathfrak f}(u)\cdot \grad T_H(\phi(u)) ,
\end{array}
\end{equation}
where $B_F(z)=\dsp \int_0^z T_H(\phi(s))\,db(s)$. The last term is
zero thanks to the boundary condition $w|_{\Sigma}=0$. Indeed,
because $F$ is assumed bounded, $\mathfrak{f}(\cdot)$ is bounded
on the support of $T_H'(\phi(\cdot))$; thus by
Lemma~\ref{FunctionalDepLemma} there exists a Lipschitz continuous
vector-valued function ${\mathfrak g}(\cdot)$ such that
$$
\dsp \int_0^z {\mathfrak f}(s)\,dT_H(\phi(s)) =\mathfrak g(\phi(z)).
$$
Hence $\mathfrak g\circ w\in L^p(0,T;W^{1,p}_0(\Om;\R^N))$, so
that one can apply the Green-Gauss formula to get
$$
\iint_Q \div \Bigl( \int_0^w {\mathfrak f}(s)\,dT_H(\phi(s)) \Bigr)
=\int_0^T\!\int_{\ptl\Om}  {\mathfrak g} (w)\cdot \nu=0,
$$
where $\nu$ is the exterior unit normal vector to $\ptl\Om$. By
definition of $T_H(\cdot)$ and because $\phi(\cdot)$ is
non-decreasing, dropping positive terms in the left-hand side of
\eqref{EstWithTalpha}, by $(H_9)$ we infer
\begin{align*}
\frac{1}{C(\|u\|_{L^\infty(Q_T)})}
 \iint_{[u\in F]} \Bigl|\grad \phi(u)\Bigr|^{p}
& \leq  \iint_{[w\in \phi(F)]}  \mathfrak a(u,\grad w)\cdot \grad w
\\ & \leq \Bigl(\|b(u_0)\|_{L^1(\Om)}
+\|f\|_{L^1(Q)}\Bigr)\, \|T_H(\cdot)\|_\infty.
\end{align*}
Hence the claim follows by \eqref{EstTalpha}.
\end{proof}

In the above proof, we have used two chain rule lemmas. Now we
notice that both apply
for $u(\cdot)$ replaced with a ``process function''
$\mu(\cdot,\alpha)$, as in {\rm \ref{def:entropy1p}}, provided that for a.e.~$\al\in(0,1)$
one can substitute
$$
u(t,x):=\dsp\int_0^1\mu(t,x,\al)\,d\al
 $$
by $\mu(t,x,\al)$ in the expression of the test function.
\begin{lem}\label{lem:WeakWeakOk}
Let $(\mu,w)$ satisfy {\rm \ref{def:entropy1p}}, and
$S:\R\to \R$ be a Lipschitz continuous function such that
$S(0)=0$. Let $\zeta\in L^\infty(0,T)$.
Then
$$
\iint_{Q_T}\!\int_0^1 {\mathfrak f}(\mu(t,x,\alpha)))
\cdot\grad S(w(t,x))\;\zeta(t)\; dtdxd\alpha=0.
$$
\end{lem}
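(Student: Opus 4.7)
The plan is to rewrite the integrand as the spatial divergence of a vector field with zero trace on $\partial\Om$, in analogy with the convection-term treatment in the proof of Lemma~\ref{SmallSetTruncationLemma}; Green's formula then yields the vanishing of the integral. Two observations drive the argument. First, since $w(t,x)\equiv\phi(\mu(t,x,\al))$ is independent of $\al$ and $\phi$ is strictly increasing on $\R\setminus E$ by $(H_2)$, for a.e.\ $(t,x)$ with $w(t,x)\notin G:=\phi(E)$ the value $\mu(t,x,\al)=\phi^{-1}(w(t,x))$ is independent of $\al$, whence $\int_0^1\mathfrak{f}(\mu(\cdot,\al))\,d\al=\mathfrak{f}(\phi^{-1}(w))$ a.e.\ on $[w\notin G]$. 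Second, since $\meas G=0$, the Serrin-Ziemer theorem for Sobolev functions yields $\grad w=0$ a.e.\ on $[w\in G]$, so that $\grad S(w)=S'(w)\grad w=0$ a.e.\ there.

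Boundedness of $\mu$ and continuity of $\mathfrak{f}$ ensure that $\mathfrak{f}\circ\phi^{-1}$ is bounded on the range of $w$ intersected with $\R\setminus G$. Extending it by zero on $G$ and setting
\[
\mathfrak{H}(z):=\int_0^z \mathfrak{f}(\phi^{-1}(s))\,S'(s)\,\char_{[s\notin G]}\,ds\in\R^N
\]
for $z$ in a bounded interval containing the range of $w$, then extending Lipschitz-continuously to $\R$ with $\mathfrak{H}(0)=0$, we obtain $\mathfrak{H}(w)\in L^p(0,T;W^{1,p}_0(\Om;\R^N))$. The chain rule for Lipschitz composition, combined with the second observation above to absorb the factor $\char_{[w\notin G]}$, yields
\[
\Div\mathfrak{H}(w)=\mathfrak{f}(\phi^{-1}(w))\,S'(w)\,\grad w\,\char_{[w\notin G]}=\int_0^1\mathfrak{f}(\mu)\,d\al\cdot\grad S(w)\qquad\text{a.e.\ in $Q_T$.}
\]
Applying Green's formula together with the zero trace of $\mathfrak{H}(w)(t,\cdot)$ gives $\int_\Om\Div\mathfrak{H}(w(t,\cdot))\,dx=0$ for a.e.\ $t$; multiplying by $\zeta(t)$, integrating in $t$, and applying Fubini then conclude the proof.

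The main technical delicacy is the validity of the chain rule for the Lipschitz composition $\mathfrak{H}\circ w$: the derivative $\mathfrak{H}'$ involves $\mathfrak{f}\circ\phi^{-1}$, which can be discontinuous across the null set $G$, so the pointwise form of the identity is suspect on $w^{-1}(G)$. This is resolved by the Serrin-Ziemer theorem, which asserts that $\grad w=0$ a.e.\ on $w^{-1}(N)$ for any Lebesgue-null $N$; the chain-rule identity therefore holds a.e.\ and is insensitive to how $\mathfrak{f}\circ\phi^{-1}$ is extended on $G$. One may equivalently invoke Lemma~\ref{FunctionalDepLemma} with $\theta(s):=\mathfrak{f}(s)S'(\phi(s))$ after a $C^1$ approximation of $S$, passing to the limit via $\grad S_\eps(w)\rightharpoonup\grad S(w)$ weakly in $L^p$, which suffices because $\mathfrak{f}(\mu)\zeta$ is bounded.
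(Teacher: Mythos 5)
Your proof is correct and takes essentially the same route as the paper: write the integrand as the spatial divergence of a Lipschitz vector field composed with $w\in L^p(0,T;W^{1,p}_0(\Om))$, then invoke Green's formula together with the homogeneous Dirichlet condition. The only cosmetic difference is that the paper gets this field $\mathfrak{g}$ directly from Lemma~\ref{FunctionalDepLemma} (so that $\mathfrak{g}(\phi(z))=\int_0^z\mathfrak{f}(s)\,dS(\phi(s))$) without ever having to discuss $\phi^{-1}$, whereas you build the same object explicitly as $\mathfrak{H}$ via $\phi^{-1}$ on $\R\setminus G$ and neutralize the null image set $G$ with Stampacchia's lemma — the two constructions coincide a.e., so the arguments are equivalent.
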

\begin{proof}
By Lemma~\ref{FunctionalDepLemma}, there exists a Lipschitz
vector-valued function $ \mathfrak{g}$ such that $\dsp\int_0^{z}
{\mathfrak f}(s)\,dS(\phi(s))={\mathfrak{g}}(\phi(z))$ for
$|z|\leq \|\mu\|_{L^\infty(\Do\!\times\!(0,1))}$. We have
\begin{align*}
    &\iint_{Q_T}\!\int_0^1 {\mathfrak f}(\mu(\al))\,d\al\cdot\grad S(\phi(u))\,\zeta
    \\ & =\iint_{Q_T}\!\int_0^1 {\mathfrak f}(\mu(\al))\cdot \grad
    S(\phi(\mu(\al)))\,d\al\,\zeta \\
    & = \iint_{Q_T}\!\int_0^1 \div\, \Bigl(\int_0^{\mu(\al)}
    {\mathfrak f}(s)\,dS(\phi(s)) \Bigr)\,d\al\,\zeta
    \\ & =\int_0^T\!\!\!\int_\Om \div\, {\mathfrak{g}}(w)\,\zeta
    = \int_0^T\!\!\!\int_{\ptl\Om}{\mathfrak{g}}(w) \cdot \nu\;\zeta=0
\end{align*}
because for a.e.~$\alpha\in (0,1)$, $\phi(\mu(\alpha))\equiv
\phi(u)=w\in L^p(0,T;W^{1,p}_0(\Om))$.
\end{proof}
\begin{lem}\label{lem:IBP-measures} Let $\Om$ be a bounded domain
of $\R^n$, $T>0$, $Q_T:=(0,T)\times\Om$, and $1<p<+\infty$. Let
$g\in C(\R;\R)$. Let $b\in C(\R; \R)$ be non-decreasing.
Set
$$
B_g(z):=\int_0^z g(s)\,db(s).
$$

Let $\mu\in L^\infty (Q_T\times(0,1))$; set $u=\dsp\int_0^1
\mu(\al)\,d\al$. Assume that
$$
g(u)\in L^p(0,T;W^{1,p}_0(\Om))\cap L^\infty(Q_T)
$$
and, moreover,
$$
g(\mu(\al))\equiv g(u).
$$
Assume that
$$
\pt \biggl(\int_0^1 b(\mu(\al))\,d\al\biggr)\in
L^{p'}(0,T;W^{-1,p'}(\Om))+L^1(Q_T)
$$
and
$$
\dsp\int_0^1 b(\mu(\al))\,d\al|_{t=0}=b(u_0)
$$
in the following sense:
$$
\begin{array}{l}
\dsp \text{$\forall \xi\in L^p(0,T;W^{1,p}_0(\Om))$ such that $\pt
\xi\in L^\infty(Q_T)$ and $\xi(T,\cdot)=0$},\\[8pt]
\dsp \int_0^T \langle \dsp\pt \biggl(\int_0^1\!
b(\mu(\al))\,d\al\biggr)\,,\,\xi\rangle =-\iint_{Q_T}\! \int_0^1\!
b(\mu(\al))\,d\al\,\pt \xi-\int_\Om b(u_0)\xi(0,\cdot).
\end{array}
$$
Then for all $\zeta \in \mathcal D([0,T))$,
$$
\int_0^T\!\! \langle \pt \biggl(\int_0^1
\!\!b(\mu(\al))\,d\al\biggr)\,,\, g(u)\zeta\rangle = -\iint_{Q_T}
\!\int_0^1 \!\!B_g(\mu(\al))\,d\al\, \zeta_t-\int_\Om \!B_g(u_0)
\zeta(0).
$$
\end{lem}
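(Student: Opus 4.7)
The identity rests on the formal chain rule $g(u)\,\partial_t v=\partial_t W$ where $W:=\int_0^1 B_g(\mu(\cdot,\cdot,\alpha))\,d\alpha$. Indeed, for each fixed $\alpha$, $\partial_t B_g(\mu(\alpha))=g(\mu(\alpha))\,\partial_t b(\mu(\alpha))=g(u)\,\partial_t b(\mu(\alpha))$, the last equality by the consistency hypothesis $g(\mu(\alpha))\equiv g(u)$; integrating over $\alpha$ yields $\partial_t W=g(u)\,\partial_t v$. Pairing against $\zeta\in\mathcal{D}([0,T))$ and integrating by parts in $t$ produces the stated identity, provided the trace $W|_{t=0}$ equals $B_g(u_0)$; the latter follows from $v|_{t=0}=b(u_0)$ together with the representation $B_g=\widetilde B_g\circ b$ furnished by Lemma~\ref{FunctionalDepLemma} applied with $\varphi=b$, $\theta=g$.

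To make this rigorous, I would use time-mollification. Let $\rho_\delta$ be a standard mollifier in $t$, extend $g(u)$ by $g(u_0)$ on $\{t<0\}$, and set $(g(u))_\delta:=g(u)*_t\rho_\delta$. For $\delta<\mathrm{dist}(\supp\zeta,\{T\})$, the function $\xi_\delta:=(g(u))_\delta\,\zeta$ belongs to $L^p(0,T;W^{1,p}_0(\Om))$ with $\partial_t\xi_\delta\in L^\infty(Q_T)$ and $\xi_\delta(T,\cdot)\equiv 0$, so it is admissible in the weak formulation of $\partial_t v$:
\begin{equation*}
\int_0^T\!\langle\partial_t v,\xi_\delta\rangle=-\iint_{Q_T}v\,\partial_t(g(u))_\delta\,\zeta-\iint_{Q_T}v\,(g(u))_\delta\,\zeta_t-\int_\Om b(u_0)\,(g(u))_\delta(0,\cdot)\,\zeta(0).
\end{equation*}
As $\delta\to 0$, $(g(u))_\delta\to g(u)$ in $L^p(0,T;W^{1,p}_0(\Om))\cap L^\infty(Q_T)$, so the left-hand side converges to $\int_0^T\langle\partial_t v,g(u)\zeta\rangle$ while the second right-hand term converges to $-\iint_{Q_T}v\,g(u)\,\zeta_t$.

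The core remaining step is to identify the limit of $-\iint_{Q_T}v\,\partial_t(g(u))_\delta\,\zeta-\int_\Om b(u_0)(g(u))_\delta(0,\cdot)\,\zeta(0)$. Exploiting $g(u)\equiv g(\mu(\alpha))$ and Fubini, the volume part rewrites as $-\int_0^1\!\iint_{Q_T} b(\mu(\alpha))\,\partial_t(g(\mu(\alpha)))_\delta\,\zeta\,d\alpha$. A Mignot--Bamberger / Alt--Luckhaus chain-rule argument then identifies $b(\mu(\alpha))\,\partial_t(g(\mu(\alpha)))_\delta$ as asymptotically producing $\partial_t B_g(\mu(\alpha))$; integrating by parts in $t$ and combining with the boundary contribution yields the target $-\iint_{Q_T}W\,\zeta_t-\int_\Om B_g(u_0)\,\zeta(0)+\iint_{Q_T}v\,g(u)\,\zeta_t$, which together with the easy terms from the previous step produces the stated identity. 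The main obstacle is precisely this chain-rule step: $\partial_t b(\mu(\alpha))$ is not available for individual $\alpha$ — only the $\alpha$-averaged derivative $\partial_t v\in L^{p'}(0,T;W^{-1,p'}(\Om))+L^1(Q_T)$ is. The way around is to keep the $\alpha$-integration present throughout all manipulations and to transfer the time regularity of $v$ to $W$ via the Lipschitz dependence $B_g=\widetilde B_g\circ b$, performing the limit arguments at the level of the integrated objects $v$ and $W$ where the hypothesis is available.
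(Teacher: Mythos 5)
Your route matches the paper's (which is only sketched): approximate the time derivative — you by mollification, the paper by the Steklov/finite-difference quotient, which are interchangeable here — exploit the consistency $g(u)\equiv g(\mu(\alpha))$ to carry the test factor inside the $\alpha$-integral, and pass to the limit at the $\alpha$-integrated level. You also correctly identify the crux, namely that the Alt--Luckhaus / Mignot--Bamberger chain rule cannot be invoked for fixed $\alpha$ because $\partial_t b(\mu(\alpha))$ need not lie in $L^{p'}(0,T;W^{-1,p'}(\Om))+L^1(Q_T)$ for a.e.~$\alpha$; this is exactly what the paper's remark in the proof stresses.

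Where your proposal comes up short is the claimed workaround. The Lipschitz factorization $B_g=\widetilde B_g\circ b$ from Lemma~\ref{FunctionalDepLemma} does not transfer regularity from $v=\int_0^1 b(\mu(\alpha))\,d\alpha$ to $W=\int_0^1 B_g(\mu(\alpha))\,d\alpha$, nor does it give the trace identification $W|_{t=0}=B_g(u_0)$ from $v|_{t=0}=b(u_0)$, because $\widetilde B_g$ is nonlinear and does not commute with the $\alpha$-average: in general $\int_0^1\widetilde B_g(b(\mu(\alpha)))\,d\alpha\neq\widetilde B_g\bigl(\int_0^1 b(\mu(\alpha))\,d\alpha\bigr)$. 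What the paper's hint ``reason separately for each $\alpha$'' actually refers to, and what makes the Carrillo--Wittbold arguments go through, is the pointwise algebraic inequality
\[
(b(a)-b(c))\,g(c)\;\leq\;B_g(a)-B_g(c)\;\leq\;(b(a)-b(c))\,g(a),
\]
valid for nondecreasing $g$ (as is the case in the application, $g=T_J\circ\phi$). Applying it at each $(t,x,\alpha)$ with $a=\mu(t,\alpha)$, $c=\mu(t-h,\alpha)$, dividing by $h$, and integrating in $\alpha$ sandwiches the Steklov quotient of $W$ between two quantities expressed through $v$ and $g(u)$ alone, so that the limit $h\to 0$ uses only the hypotheses on $\partial_t v$. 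This convexity step requires no regularity at fixed $\alpha$ whatsoever and is the ingredient missing from your identification of the limit of the $b(\mu(\alpha))\,\partial_t(g(\mu(\alpha)))_\delta$ term.
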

\begin{proof}[Proof (sketched).]
Note that the claim of Lemma~\ref{lem:IBP-measures} cannot be
deduced directly from the usual Mignot-Bamberger and
Alt-Luckhaus chain rule lemma; the reason is that we cannot expect
$\pt b(\mu(\al))$ to belong to $L^{p'}(0,T;W^{-1,p'}(\Om))+L^1(Q_T)$ for a.e.~~$\alpha$.
But it suffices to reproduce the proof (see, e.g., \cite{CarrilloWittbold})
which is by discretization of $\pt \biggl(\int_0^1 b(\mu(\al))\,d\al\biggr)$.
Indeed, we have for a.e.~$t,t-h\in(0,T)$,
\begin{align*}
& \frac 1h \left( \int_0^1 b(\mu(t,\al))\,d\al
- \int_0^1\! b(\mu(t\!-\!h,\al)) \,d\al\right)\;g(u(t))
\\ & = \int_0^1\! \frac 1h \Bigl( b(\mu(t,\al))-b(\mu(t\!-\!h,\al))\Bigr)
\,g(\mu(t,\al))\;d\al,
\end{align*}
and now we can reason separately for each $\alpha$. Thus
the arguments of \cite{CarrilloWittbold} apply.
\end{proof}

\section{Proof of $L^1$ contraction and comparison principles}
\label{sec:Uniqueness}

Now we turn to the proof of Theorem~\ref{th:EPSunique} and
Remark~\ref{rem:sub-super-solutions}.   Most of the statements are
standard. We only notice that while proving
Theorem~\ref{th:EPSunique}(i), one obtains that $b(\mu)$ and
$\psi(\mu)$ are independent of $\al$; since $\phi(\mu)=w$ is
independent of $\al$ by definition, one concludes that $\mathfrak
f(\mu)\equiv \tilde{\mathfrak{f}}(b(\mu),\psi(\mu),\phi(\mu))$ is
also independent of $\alpha$, thus the entropy process solution
$\mu$ gives rise to the entropy solution
$u=\int_0^1\mu(\al)\,d\al$. This is the only point where the
special structure of the dependency of $\mathfrak f$ on $u$ is
used.

The proof of Theorem~\ref{th:EPSunique} is essentially the same as
in Carrillo and Wittbold \cite{CarrilloWittbold}; it is based on
the techniques of Carrillo \cite{Carrillo} and on hypothesis
$(H_{11})$ (notice that in the case $\phi=\text{Id}$, one has
$E=\O$; therefore $(H_{11})$ reduces to the Carrillo-Wittbold
hypothesis in this case). For Theorem~\ref{th:EPSunique}(i), we
also need the adaptation of the Carrillo arguments to the
framework of entropy process solutions. This has been done by
Eymard, Gallou\"et, Herbin and Michel \cite{EGHMichel}, Michel and
Vovelle \cite{MichelVovelle} and Andreianov, Bendahmane and
Karlsen \cite{ABK}.
 Therefore we only point
out why hypothesis $(H_{11})$ is sufficient for the uniqueness of
an entropy solution in the case of problem $(P)$ with $\phi$ that
can be not strictly increasing.

The role of hypothesis  $(H_{11})$ is to ensure that
\begin{equation}\label{eq:mechante-integrale}
 \limsup_{\eps\to 0}  \iint_{Q_T}\!\iint_{Q_T}\frac {1}{\eps}\,
\Bigl(\,\mathfrak{a}(u,\grad w)-\mathfrak{a}(\hat u,\grad \hat
w)\,\Bigr)\cdot\Bigl(\grad w - \grad \hat w \Bigr)
\;\char_{[0<w-\hat w<\eps]}\geq 0,
\end{equation}
where
$$
u=\int_0^1 \mu(\al)\,d\al,\;\; w=\phi(u), \quad \hat u=\int_0^1
\hat \mu(\al)\,d\al,\;\; \hat w=\phi(\hat u),
$$
and  $(\mu(t,x,\al),w(t,x))$ and $(\hat \mu(s,y,\al),\hat w(t,x))$
are two entropy process solutions of $(P)$. Here, following
Kruzhkov \cite{Kruzkov}, we have taken two independent sets of the
variables $(t,x)$ and $(s,y)$.

We split the integration domain $Q_T\times Q_T$
 into several pieces.

First, notice that a.e.~on $[w\in G]\times Q_T$, we have $\grad
w=0$; thus the integrand in \eqref{eq:mechante-integrale} is
reduced to $\mathfrak{a}(\hat u,\grad \hat w) \grad \hat w
\;\char_{[0<w-\hat w<\eps]}$, which is non-negative. The same
argument applies on $Q_T\times [\hat w\in G]$.

Thus it remains to investigate the integrand in
\eqref{eq:mechante-integrale} on the set $[w\notin G]\times[\hat
w\notin G]$. Let us introduce $G^\eps:=\{z\in \R\,|\,
\dist(z,G)<\eps\}$. For a.e.~$(t,x,s,y)\in [w\notin G]\times[\hat
w\notin G]$ we have
:\\[3pt]
(a) either $w(t,x)$ and $w(s,y)$ belong to the same connected
component of $\R\setminus G$;\\
(b) or (a) fails, but $w(t,x)\in G^\eps\setminus G$ and $\hat w(s,y)\in G^\eps\setminus G$;\\
(c) or both (a) and (b) fail, but then $|w(t,x)-\hat w(s,y)|\geq
2\eps$.\\[3pt]
We then split $[w\notin G]\times[\hat w\notin G]$ into disjoint
union of sets $S_a\cup S_b \cup S_c$, according to which of the
above cases (a),(b),(c) takes place at $(t,x,s,y)\in [w\notin
G]\times[\hat w\notin G]$. On $S_a$, we use assumption $(H_{11})$
and infer that the integrand in \eqref{eq:mechante-integrale} is
minorated by
$$
-\max
\{C(r,s)\;\Bigl|\,\;|r|,|s|\leq\|u\|_\infty\}\;(1+|\grad
w|^p+|\grad \hat w|^p)\,\char_{[0<w-\hat w<\eps]}.
$$
Because the $2(N\!+\!1)$-dimensional Lebesgue measure of the set
$[0<w-\hat w<\eps]$ goes to zero as $\eps\to 0$, the limit of the
corresponding part of the integral in
\eqref{eq:mechante-integrale} is minorated by zero.

On $S_b$, we bound the integrand in \eqref{eq:mechante-integrale}
from below by $-\frac p\eps\,(|\grad w|^p+|\grad \hat w|^p)$.
Using Lemma~\ref{SmallSetTruncationLemma} we have e.g.
$$
\frac 1\eps\iint_{[w\in G^\eps\setminus G]}\!\!\iint_{[\hat w\in
G^\eps\setminus G]} |\grad w|^p \leq  C \frac{\meas(G^\eps)}{\eps}
\;\iint \char_{[\hat w\in G^\eps\setminus G]}.
$$
By the continuity of the Lebesgue measure and because
$\cup_{\eps>0} G^\eps\setminus G=\O$,  the measure of the set
$[\hat w \in G^\eps\setminus G]$ tends to zero as $\eps\to 0$.
Therefore, using  assumption $(H_3)$, we deduce that the
corresponding part of the limit in \eqref{eq:mechante-integrale}
is non-negative.

Finally, on $S_c$ the integrand in \eqref{eq:mechante-integrale}
is zero.

This ends the proof of \eqref{eq:mechante-integrale}.

\section{A priori estimates}
\label{sec:APrioriEstimates}

The following estimates are rather standard.
\begin{lem}\label{lem:apriori-estimates}
Let $(b_n,\psi_n,\phi_n,\mathfrak{a}_n,\tilde{\mathfrak{f}}_n;u^n_0,f_n)$,
$n\in\N$ be a sequence of data satisfying the assumptions of
Theorem~\ref{th:ContDep}. Assume that the limiting data
$(b,\psi,\phi,\mathfrak{a},\tilde{\mathfrak{f}};u_0,f)$ are such
that $(H_5)$ and $(H_{str})$ hold.

Let $u_n$ be an entropy solution of problem $(P_n)$. Then there
exists a constant $M$ and a modulus of continuity
$\omega:\R^+\to\R^+$,  such that for
all $n\in\N$,\\[3pt]

(i) $\|u_n\|_{L^\infty(Q_T)}\leq M$;
\\[3pt]

 (ii) the following quantities are all upper bounded by $M$:
\begin{align*}
&\|\phi_n(u_n)\|_{L^p(0,T;W^{1,p}_0(\Om))}, \,
\|\phi_n(u_n)\|_{L^p(Q_T)}, \, \|\mathfrak{a}_n(u_n,\grad\phi_n(u_n))\|_{L^{p'}(Q_T)}, \,
\\ &  \|\psi_n(u_n)\phi_n(u_n)\|_{L^1(Q_T)}, \,
\|B_n(u_n)\|_{L^\infty(0,T;L^1(\Om))}
\end{align*}
where $B_n$ is  defined in \eqref{eq:B-defi}
with $b,\phi$ replaced by $b_n,\phi_n$;
\\[3pt]

 (iii) $\text{ for all $\Del>0$,}\quad
\dsp \iint_{Q_{T-\Del}}
|\phi_n(u_n(t\!+\!\Del,x))\!-\!\phi_n(u_n(t,x))| \;\leq\,
\omega(\Del).$
\end{lem}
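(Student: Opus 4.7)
For (i), I would apply the entropy inequality \ref{def:entropy2} with constant $c\in\R^+$ and test function $\xi(t,x)=\zeta(t)$, which is admissible for $c\ge 0$ since $\xi$ need not vanish on $\ptl\Om$. Taking $\zeta$ to approximate $\char_{[0,\tau]}$ and noting that the spatial-gradient terms drop out gives
\begin{equation*}
\int_\Om b_{n,c}^+(u_n(\tau,\cdot))\dx + \int_0^\tau\!\!\int_\Om \sgn{u_n - c}(\psi_n(u_n) - f_n)\dx\ds \le \int_\Om b_{n,c}^+(u_0^n)\dx,
\end{equation*}
where $b_{n,c}^+(z):=\int_0^z (\eta_c^+)'(s)\,db_n(s)$. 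Using $(H_5)$ and the pointwise monotone convergence $\psi_n\to\psi$, I would choose $c$ uniformly in $n$ large enough that either $b_n(u_n)\le b_n(c)$ combined with $b(+\infty)=+\infty$ gives a uniform $L^\infty$ bound on $u_n$, or $\psi_n(c)>\sup_n\|f_n^+\|_{L^\infty(Q_T)}$ (available when $\psi(+\infty)=+\infty$) forces the reaction integrand to be nonpositive on $\{u_n>c\}$ and hence $\meas\{u_n>c\}=0$. The lower bound follows symmetrically.

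For (ii), I would use the Alt-Luckhaus weak formulation (Remark~\ref{rem:weak-AL-formulation}) with $\xi=\phi_n(u_n)\in L^p(0,T;W^{1,p}_0(\Om))\cap L^\infty(Q_T)$, first regularizing in time to meet the $\pt\xi\in L^\infty$ requirement and then passing to the limit. The Mignot-Bamberger/Alt-Luckhaus chain rule yields $\int_0^T\langle \pt b_n(u_n),\phi_n(u_n)\rangle\dt = \int_\Om B_n(u_n(T))\dx - \int_\Om B_n(u_0^n)\dx$. Coercivity $(H_9)$, combined with the $L^\infty$ bound from (i) and the uniformity of $C(\cdot)$ in $n$, produces the dominant term $\iint |\grad\phi_n(u_n)|^p\dxdt$. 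The convection $\iint\tilde{\mathfrak{f}}_n\cdot\grad\phi_n(u_n)\dxdt$ is absorbed by Young's inequality using that $\tilde{\mathfrak{f}}_n(b_n(u_n),\psi_n(u_n),\phi_n(u_n))$ is uniformly $L^\infty$-bounded (from (i) and uniform convergence of $\tilde{\mathfrak{f}}_n$ on compacts). The reaction $\iint\psi_n(u_n)\phi_n(u_n)\dxdt$ is nonnegative, and the source and initial-energy terms are controlled by the uniform $L^\infty$ and $L^1$ data bounds. Collecting these yields all the bounds in (ii); Poincar\'e provides the $L^p$ bound on $\phi_n(u_n)$, and $(H_{10})$ the $L^{p'}$ bound on $\mathfrak{a}_n$.

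For (iii), fix $\Delta>0$ and set $\xi(t,x):=\phi_n(u_n(t+\Delta,x))-\phi_n(u_n(t,x))$ on $Q_{T-\Delta}$ (extended by zero); by (i)--(ii) its $L^p(0,T;W^{1,p}_0(\Om))\cap L^\infty(Q_T)$-norm is uniformly bounded. I would introduce the time-antiderivative
\begin{equation*}
\eta(s,x):=\int_{\max(0,s-\Delta)}^{\min(T-\Delta,s)}\xi(t,x)\,dt,
\end{equation*}
which satisfies $\|\eta\|_{L^p(W^{1,p}_0)}\le\Delta\|\xi\|_{L^p(W^{1,p}_0)}$ and $\|\eta\|_{L^\infty}\le\Delta\|\xi\|_{L^\infty}$ by Minkowski. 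Fubini and the duality pairing for $\pt b_n(u_n)\in L^{p'}(0,T;W^{-1,p'}(\Om))+L^1(Q_T)$ give
\begin{equation*}
\iint_{Q_{T-\Delta}}[b_n(u_n(t+\Delta))-b_n(u_n(t))]\,\xi(t,x)\dxdt = \int_0^T\langle \pt b_n(u_n)(s),\eta(s)\rangle\ds,
\end{equation*}
and substituting the PDE expression for $\pt b_n(u_n)$ together with the bounds of (ii) yields an upper bound of order $C\Delta$. Since $b_n$ and $\phi_n$ share monotonicity, the left-hand integrand is pointwise nonnegative, producing the Alt-Luckhaus quadratic bound $\iint_{Q_{T-\Delta}}[b_n(u_n(\cdot+\Delta))-b_n(u_n(\cdot))]\,[\phi_n(u_n(\cdot+\Delta))-\phi_n(u_n(\cdot))]\dxdt\le C\Delta$.

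The main obstacle is converting this quadratic bound into the claimed $L^1$-modulus of continuity for $\phi_n(u_n)$. I would combine the space-regularity from (ii) with the uniform continuity of $\{\phi_n\}$ on compacts (arising from monotonicity, pointwise convergence, and the limit structure condition $(H_{str})$) and invoke an Aubin-Simon/Dubinskii-type compactness lemma adapted to the triply nonlinear framework (in the spirit of Alt--Luckhaus \cite{AltLuckhaus}). This should yield relative compactness of $\{\phi_n(u_n)\}$ in $L^1(Q_T)$ and hence a uniform modulus $\omega(\Delta)$ as required.
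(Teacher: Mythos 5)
Your part (ii) is essentially the paper's argument (the paper makes the convection contribution vanish exactly via Lemma~\ref{lem:WeakWeakOk} and the homogeneous boundary data, whereas you absorb it by Young's inequality; both are fine). The other two parts have genuine gaps.

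\textbf{Part (i).} Your constant--$c$ entropy argument handles the case of $(H_5)$ with $\psi(\pm\infty)=\pm\infty$ and $f_n^\pm$ uniformly bounded in $L^\infty(Q_T)$. However, the hypotheses of Theorem~\ref{th:ContDep} also allow the case $b(\R)=\R$ with $f_n$ merely bounded in $L^1(0,T;L^\infty(\Om))$, and there your argument breaks down: with a \emph{fixed} $c\ge\|u_0^n\|_{L^\infty(\Om)}$ one only obtains
$\int_\Om (b_n(u_n(\tau))-b_n(c))^+ \le |\Om|\int_0^\tau\|f_n(s,\cdot)\|_{L^\infty(\Om)}\,ds$,
which is an $L^1$-type bound and gives no $L^\infty$ control on $u_n$. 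The paper instead compares $u_n$ with a \emph{time-dependent}, spatially constant supersolution $\overline u(t)\in b_n^{-1}(M(t))$ where $M(t)=\sup_n\bigl(\|b_n(u_0^n)\|_{L^\infty}+\int_0^t\|f_n(\tau,\cdot)\|_{L^\infty}\,d\tau\bigr)$, which absorbs the source term exactly; this uses the comparison principle of Theorem~\ref{th:EPSunique}(ii)/Remark~\ref{rem:comparaison}, not the single-entropy inequality. A secondary but real omission: from $\psi_n(u_n)\le\psi_n(c)$ (resp.\ $b_n(u_n)\le b_n(c)$) you cannot immediately conclude $u_n\le c$; one must pass to a $c'>c$ with $\psi(c')>\psi(c)$ (possible because $\psi(+\infty)=+\infty$) and use $\psi_n\to\psi$ pointwise to get $\psi_n(c')>\psi_n(c)$ for $n$ large, hence $u_n<c'$. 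The paper's phrase ``$b(\R)=\R$ and the pointwise convergence of $b_n$ to $b$ ensure the uniform $L^\infty$ bound'' is doing exactly this invertibility step.

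\textbf{Part (iii).} The derivation of the Alt--Luckhaus quadratic estimate
$\iint_{Q_{T-\Del}}\bigl(b_n(u_n)(t+\Del)-b_n(u_n)(t)\bigr)\bigl(\phi_n(u_n)(t+\Del)-\phi_n(u_n)(t)\bigr)\le C\Del$
is correct and matches the paper. But the passage from this product bound to the claimed $L^1$ modulus $\iint|\phi_n(u_n)(t+\Del)-\phi_n(u_n)(t)|\le\omega(\Del)$ is precisely the point of the lemma, and you leave it to an invocation of ``Aubin--Simon/Dubinskii compactness''. That does not apply here: those lemmas require a bound on a time-translate (or on a time derivative) of the quantity whose $L^1$ compactness is sought, i.e.\ of $\phi_n(u_n)$ itself; what you have controls only the \emph{product} of the $b_n$-increment and the $\phi_n$-increment. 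The paper's proof is where $(H_{str})$ actually acts: write $\tilde\phi=\phi\circ b^{-1}$ (continuous by $(H_{str})$), let $\pi$ be a concave modulus of continuity of $\tilde\phi$ on $[-M,M]$, $\Pi=\pi^{-1}$, $\overline\Pi(r)=r\,\Pi(r)$, and $\overline\pi=\overline\Pi^{-1}$ concave. Then $\overline\Pi(|\tilde\phi(v)-\tilde\phi(y)|)\le|v-y|\,|\tilde\phi(v)-\tilde\phi(y)|$, and Jensen's inequality (applied to the concave $\overline\pi$) turns the quadratic bound into $\iint|\phi(u_n)(t+\Del)-\phi(u_n)(t)|\le d\,\overline\pi(d\,\Del+r_n)$, with $r_n\to0$ coming from replacing $b_n,\phi_n$ by $b,\phi$ via Dini. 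Your ``uniform continuity of $\{\phi_n\}$ on compacts'' is not the relevant property — that is automatic from Dini — what is needed is the uniform continuity of $\phi$ as a function of $b$, i.e.\ of $\tilde\phi$, and the concave-modulus/Jensen step is not something you can omit or replace by a generic compactness lemma.
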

\begin{proof}
(i) First assume $b(\R)=\R$. Consider the function
$$M(t):=\sup_{n\in\N} \Bigl( \|b_n(u^n_0)\|_{L^\infty(\Om)} +\int_0^t
\|f_n(\tau,\cdot)\|_{L^\infty(\Om)}\,d\tau\Bigr)<+\infty.$$
 Then for any measurable choice of
  $\overline{u}(t,x) \in b_n^{-1}(M(t))$, $\overline{u}$ is an entropy supersolution of
$(P_n)$. Similarly, $\underline{u}(t,x) \in b_n^{-1}(-M(t))$ is an
entropy subsolution of $(P_n)$. The comparison principle of
Remark~\ref{rem:comparaison} ensures that a.e.~on $Q$,
$$-M(T)\leq -M(t)\leq b_n(u_{n})(t,x) \leq M(t)\leq M(T).$$
Now the assumption $b(\R)=\R$ and the pointwise convergence of
$b_n$
to $b$ ensure the uniform $L^\infty(Q_T)$ bound on $u_n$.

If $b(+\infty)<+\infty$, then $(H_5)$ ensures that any
constant $\underline{u}\in \psi_n^{-1}(\|f^+\|_{L^\infty(Q_T)})$
is an entropy subsolution of $(P_n)$. As hereabove, the
comparison principle and the pointwise convergence of $\psi_n$ to
$\psi$ satisfying $\psi(+\infty)=+\infty$ yield a uniform
majoration of $u_n$. The
case $b(-\infty)>-\infty$ is analogous.\\[3pt]

(ii) We use the test function $\phi_{n}(u_{n})$ in the weak
formulation of $(P_n)$. The duality product between
$$
\phi_n(u_{n})\in L^{p}(0,T;W^{1,p}_0(\Om))\cap L^\infty(Q_T)
$$
and
$$
\pt b_n(u_{n})\in L^{p'}(0,T;W^{-1,p'}(\Om))+L^1(Q_T)
$$
is treated via the standard chain rule argument
(\cite{AltLuckhaus,Otto:L1_Contr,CarrilloWittbold}). Using in
addition the chain rule  of Lemma~\ref{lem:WeakWeakOk}, the
$L^\infty$ bound on $u_n$ shown in (i), the uniform coercivity
assumption $(H_9)$, and the obvious inequality $B_n(z)\leq
b_n(z)\phi_n(z)$, we obtain the inequality
\begin{align*}
& \int_\Om B_n(u_n(t,\cdot))
+ \int_0^t\!\!\int_\Om \biggl(\psi_n(u_n)\phi_n(u_n)+ c|\grad \phi_n(u_n)|^p \biggr)
\\ & \qquad \leq
\int_\Om b_n(u^n_0)\,\phi_n(u_0^n) + \int_0^t\!\!\int_\Om f_n\, \phi_n(u_n)
\end{align*}
with some $c>0$ independent of $n$. Notice that the functions
$b_n,\phi_n$ are locally uniformly bounded because they are
monotone and converge pointwise to $b,\phi$, respectively.
Therefore the right-hand side of the above inequality is bounded
uniformly in $n$, thanks to (i) and to the uniform bounds on the
data $u_0^n$ and $ f_n$ in $L^\infty(\Om)$ and in $L^1(Q_T)$,
respectively. The uniform estimate of the left-hand side follows.
We then estimate $\|\phi_n(u_n)\|_{L^p(Q_T)}$  by the Poincar\'e
inequality; the $(L^{p'}(Q_T))^N$ bound on
$\mathfrak{a}_n(u_n,\grad\phi_n(u_n))$ follows from the growth assumption $(H_{10})$.\\[3pt]

(iii) Let $\Del>0$. The weak formulation of $(P_n)$ yields, for
a.e.~$t,t+\Del\in (0,T)$,
\begin{align*}
&\int_\Om (b_n(u_{n})(t+\Del)-b_n(u_{n})(t))\,\xi
\\ & \quad =
\int_t^{t\!+\!\Del}\!\!\!\! \int_\Om \Bigl[\;\Bigl(\,-{\mathfrak f}_{n}(u_{n})
+{\mathfrak a}(u_n,\grad \phi_{n}(u_{n}))\,\Bigl)
\cdot \grad \xi\; -\; \psi_n(u_n)\,\xi
\;+\; f\,\xi\;\Bigr]
\end{align*}
for all $\xi\in W^{1,p}_0(\Om)\cap L^\infty(\Om)$ (here and in the
sequel, we drop the dependence on $x$ in the notation). We take
$\xi=\phi_{n}(u_{n}(t+\Del))-\phi_{n}(u_{n}(t))$ and integrate in
$t$, then use the Fubini theorem which makes appear the factor
$\Del$ in the right-hand side; with  the estimates of (i),(ii) and
the uniform bounds on the data, we deduce that
\begin{equation}\label{Transl-base}
 \iint_{Q_{T-\Del}}
|b_n(u_{n})(t+\Del)-b_n(u_{n})(t)|\;|\phi_{n}(u_{n})(t+\Del)-\phi_{n}(u_{n})(t)|
\leq d \,|\Del|.
\end{equation}
Here $d$ is a generic constant independent of $n$. In the sequel,
denote by $(r_n)_n$ a generic sequence vanishing as $n\to\infty$.
Notice that by the Dini theorem, $b_n,\phi_n$ converge to
$b,\phi$, respectively, uniformly on compact subsets of $\R$. By
(i), it follows that we can replace $b_n,\phi_n$ in
\eqref{Transl-base} by $b,\phi$, provided that a term $r_n$ is
added to the right-hand side of \eqref{Transl-base}.

Now notice that assumption $(H_{str})$ implies that
$\tilde\phi:=\phi\circ b^{-1}$ is a continuous function. Let $M$
be the $L^\infty$ bound for $u_n$ in (i). Let $\pi$ be a concave
modulus of continuity of $\tilde\phi$ on $[-M,M]$, $\Pi$ be its
inverse, and $\overline\Pi(r)=r\,\Pi(r)$. Let $\overline\pi$ be
the inverse of $\overline\Pi$. Note that $\overline \pi$ is
concave, continuous, $\overline\pi(0)=0$. Denote
$v=b(u_{n})(t+\Del)$ and $y=b(u_{n})(t)$. We have
\begin{equation*}
\begin{split}
\iint_{Q_{T-\Del}} |\tilde\phi(v)-\tilde\phi(y)|
& =\iint_{Q_{T-\Del}} \overline\pi
\Biggl(\overline\Pi(|\tilde\phi(v)-\tilde\phi(y)|) \Biggr)
\\ & \leq |{Q_{T-\Del}}|\,\overline\pi\Biggl(\frac{1}{|{Q_{T-\Del}}|}
\iint_{Q_{T-\Del}}\overline\Pi(|\tilde\phi(v)-\tilde\phi(y)|) \Biggr).
\end{split}
\end{equation*}
Since $|\tilde\phi(v)-\tilde\phi(y)|\leq \pi(|v-y|)$, we have
$\Pi(|\tilde\phi(v)-\tilde\phi(y)|)\leq |v-y|$ and
\begin{equation*}
\begin{array}{l}
\dsp \overline\Pi(|\tilde\phi(v)-\tilde\phi(y)|) =
\Pi(|\tilde\phi(v)-\tilde\phi(y)|)|\tilde\phi(v)-\tilde\phi(y)|\leq
|v-y|\,|\tilde\phi(v)-\tilde\phi(y)|\\[5pt]
\dsp \qquad\qquad\qquad\quad\; \equiv
|b(u_{n})(t+\Del)-b(u_{n})(t)|\;|\phi(u_{n})(t+\Del)-\phi(u_{n})(t)|.
\end{array}
\end{equation*}
Therefore the estimate \eqref{Transl-base} (with $b_n,\phi_n$ and
$d\,\Del$ replaced by $b,\phi$ and $d\,\Del + r_n$,
respectively) implies
\begin{equation*}
\begin{array}{l}
\dsp \iint_{Q_{T-\Del}}
|\phi(u_{n})(t\!+\!\Del)\!-\!\phi(u_{n})(t)|\\[10pt]
\dsp\qquad \quad \leq
|{Q_{T-\Del}}|\,\overline\pi\Biggl(\frac{1}{|{Q_{T-\Del}}|}
\iint_{Q_{T-\Del}}|u_{n}(t\!+\!\Del)-u_{n}(t)|\,|\phi(u_{n})(t\!+\!\Del)-\phi(u_{n})(t)|
\Biggr)\\[10pt]
\dsp\qquad\quad
=|{Q_{T-\Del}}|\,\overline\pi\Biggl(\frac{1}{|{Q_{T-\Del}}|}
\Del \Biggr)\;\leq\; d \;\overline\pi(\,d\,\Del+r_n\,),
\end{array}
\end{equation*}
and finally, replacing $\phi$ with $\phi_n$ we get
\begin{equation}\label{eq:bound-for-n-large}
\iint_{Q_{T-\Del}}
|\phi_n(u_{n}(t\!+\!\Del,x))\!-\!\phi_n(u_{n}(t,x))|\;\leq\; d
\;\overline\pi(\,d\,\Del+r_n\,) +r_n.
\end{equation}
Now using the fact that $r_n\to 0$ as $n\to\infty$ and the fact
that for all fixed $n\in\N$, the left-hand side of
\eqref{eq:bound-for-n-large} tends to zero as $\Del\to 0$, we
deduce the claim of (iii).
\end{proof}

\section{Proof of the general continuous dependence property}
\label{sec:ContDependence}

In this section, we prove Theorem~\ref{th:ContDep}. First notice
that the uniform estimates of Lemma~\ref{lem:apriori-estimates}
and Lemma~\ref{SmallSetTruncationLemma} apply. It follows that
there exists a (not relabelled) subsequence $(u_{n})_n$
such that\\[3pt]
$\bullet$ $w_{n}:=\phi_{n}(u_{n})$  converges strongly in
$L^1(Q_T)$ and a.e.~on $Q_T$ to some function $w$;\\
$\bullet$  $w_{n}$ converges weakly in $L^p(0,T;W^{1,p}_0(\Om))$;\\
$\bullet$ $\chi_n:={\mathfrak a}(u_n,\grad w_{n})$ converges weakly in $L^{p'}(Q_T)$ to some limit $\chi$;\\
$\bullet$ $u_{n}$ converges to $\mu:Q_T\times(0,1)\longrightarrow
\R$ in the sense of the nonlinear weak-$\star$ convergence
\eqref{nonlinweakstar}.

Denote $u(\cdot)=\int_0^1 \mu(\cdot,\al)\,d\al$. Thanks to the
uniform $L^\infty$ bound on $u_n$ and to the uniform convergence
of $\phi_{n}$ to $\phi$ on compact subsets of $\R$, we can
identify the limit of $w_{n}(\cdot)$ with
$\int_0^1\phi(\mu(\al,\cdot))\,d\al$; moreover,
$\phi(\mu(\alpha,\cdot))$ is independent of $\alpha\in (0,1)$,
because the convergence of $\phi(u_n)$ to $w$ is actually strong.
Thus $w,\phi(u)$ and $\phi(\mu(\alpha))$ coincide. We also
identify the limit of $\grad w_{n}$ with $\grad w$, because the
two functions coincide as elements of $\mathcal D'$.

The following lemma permits to deduce strong convergence of $u_n$
to $u$ on the set $[u\notin E]$ (recall that $E$ is assumed to be
closed).
\begin{lem}\label{lem:PointwiseConvI}
Let $\phi_n(\cdot)$ be a sequence of  continuous non-decreasing
functions converging pointwise to a continuous function
$\phi(\cdot)$. Assume $\phi(\cdot)$ is strictly increasing on
$\R\setminus E$. Let $I$ be an open interval contained in
$\R\setminus E$, and $\phi_n(u_n)\to \phi(u)$ a.e.~on $Q$. Then
$u_n\to u$ a.e.~on $[u\in I]$.
\end{lem}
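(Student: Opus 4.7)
The plan is a short subsequential argument that combines three ingredients: local uniform convergence of the monotone $\phi_n$ toward $\phi$, continuity of $\phi$, and strict monotonicity of $\phi$ on the open interval $I$. In the context of the paper the sequence $(u_n)$ is uniformly bounded in $L^\infty(Q_T)$ by Lemma~\ref{lem:apriori-estimates}(i); fix $M$ with $|u_n|,|u|\le M$ a.e.

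First, I would upgrade the pointwise convergence of $\phi_n$ to uniform convergence on $[-M,M]$. Since each $\phi_n$ is non-decreasing and the limit $\phi$ is continuous, hence uniformly continuous on $[-M,M]$, the standard P\'olya-type partition argument applies: partition $[-M,M]$ so that $\phi$ oscillates less than $\eps$ on each sub-interval, then sandwich $\phi_n$ on each sub-interval between the values of $\phi$ at the endpoints (up to $\eps$) using the monotonicity of $\phi_n$. This yields $\phi_n \to \phi$ uniformly on $[-M,M]$.

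Next, I fix a point $(t,x)$ in the full-measure subset of $[u\in I]$ at which $\phi_n(u_n(t,x)) \to \phi(u(t,x))$, and take any subsequence $u_{n_k}(t,x)\to v\in[-M,M]$ extracted by Bolzano--Weierstrass. Writing
\[
\phi_{n_k}(u_{n_k}(t,x)) - \phi(v)
=\bigl(\phi_{n_k}-\phi\bigr)(u_{n_k}(t,x)) + \bigl(\phi(u_{n_k}(t,x))-\phi(v)\bigr),
\]
the first step and continuity of $\phi$ force the left-hand side to $0$, so $\phi(v)=\phi(u(t,x))$.

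Finally, I show $v=u(t,x)$. Since $I$ is open and $u(t,x)\in I$, pick $\delta>0$ with $(u(t,x)-\delta,u(t,x)+\delta)\subset I$. If $v>u(t,x)$, choose $w$ strictly between $u(t,x)$ and $\min\{v,u(t,x)+\delta\}$; then $w\in I$, and by global monotonicity of $\phi$ together with $\phi(v)=\phi(u(t,x))$ one gets $\phi(u(t,x))\le \phi(w)\le \phi(v)=\phi(u(t,x))$, that is $\phi(w)=\phi(u(t,x))$ with $w,u(t,x)\in I$ and $w\neq u(t,x)$, contradicting strict monotonicity of $\phi$ on $I$. The case $v<u(t,x)$ is symmetric. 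Hence every subsequential limit of $u_n(t,x)$ equals $u(t,x)$, so $u_n \to u$ a.e.\ on $[u\in I]$. The main (and very mild) obstacle is precisely this last step: it is the openness of $I$---rather than just the inclusion $I\subset\R\setminus E$---that allows one to produce the intermediate point $w$ strictly between $u(t,x)$ and $v$ while staying in $I$; without openness, $v$ could sit across a flat segment of $\phi$ lying in $E$ adjacent to $u(t,x)$ and the conclusion would fail.
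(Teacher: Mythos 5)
Your argument is correct, but the route differs from the paper's in a way worth noting. Both proofs begin with the same first ingredient: upgrading pointwise convergence of the monotone $\phi_n$ to a continuous $\phi$ to local uniform convergence (the paper invokes this under the name ``Dini theorem''; your P\'olya-type partition argument is the same fact). After that the two proofs diverge. The paper fixes a compactly contained subinterval $I'\Subset I$, introduces $\delta:=\min\{\phi(a')-\phi(a),\phi(b)-\phi(b')\}>0$, and shows by a direct $\delta$-estimate that for a.e.\ $(t,x)\in[u\in I']$ one has $u_n(t,x)\in I$ for $n$ large; once $u_n$ lands in $I$, the estimate $|\phi(u_n)-\phi(u)|\le\|\phi_n-\phi\|_{C(\overline I)}+|\phi_n(u_n)-\phi(u)|$ gives $\phi(u_n)\to\phi(u)$, and continuity of $\phi^{-1}$ on $\overline I$ finishes. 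Your proof instead works directly on $[u\in I]$: you extract a convergent subsequence of $u_n(t,x)$ via Bolzano--Weierstrass (using the $L^\infty$ bound to compactify), prove every subsequential limit $v$ satisfies $\phi(v)=\phi(u(t,x))$, and then rule out $v\ne u(t,x)$ by an intermediate-point argument inside the open interval $I$, using strict monotonicity of $\phi$ on $I$ and global monotonicity outside. Your approach avoids the auxiliary exhaustion $I'\Subset I$ and is perhaps cleaner to read, at the cost of explicitly invoking the uniform $L^\infty$ bound on $(u_n)$ (which is available in the paper's context via Lemma~\ref{lem:apriori-estimates}(i), but is not among the stated hypotheses of the lemma itself). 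The paper's $\delta$-estimate is self-contained: it only needs $\phi_n\to\phi$ uniformly on the bounded interval $\overline I$ and implies nothing a priori about $u_n$ outside that interval. Your concluding remark about why openness of $I$ is essential is exactly right and makes explicit what the paper leaves tacit.
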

\begin{proof}
 Let $I=(a,b)$ and choose $I'=(a',b')$ with $a<a'<b'<b$. Introduce
 $\delta>0$ by
$$\delta:=\min\{\phi(a')-\phi(a),\phi(b)-\phi(b')\}.$$

 Notice that by the Dini theorem, the convergence
of $\phi_n(\cdot)$ to $\phi(\cdot)$ is uniform on all compact
subset of $\R$. Thus for all $\eps>0$, there exists $N\in\N$ such
that $\|\phi_n-\phi\|_{C([a,b])}< \eps/2$. With $\eps=\delta$, it
follows that for all $n$ sufficiently large,
$\phi_n(b)-\phi(b')=\phi_n(b)-\phi(b)+\phi(b)-\phi(b')>-\delta/2+\delta=\delta/2$,
and similarly, $\phi(a')-\phi_n(a)>\delta/2$. By the monotonicity
of $\phi_n(\cdot),\phi(\cdot)$,
$$
\max_{u\in I',\, z\notin I} |\phi_n(z)-\phi(u)|
=\max\{\phi_n(b)-\phi(b'),\phi(a')-\phi_n(a)\}>\delta/2.
$$
Hence if $u(t,x)\in I'$ and if $|\phi_n(u_n(t,x))-\phi(u(t,x))|<
\delta/2$, we have $u_n(t,x)\in I$. Since for all $\eps>0$ there
exists $N\in\N$ such that $|\phi_n(u_n(t,x))-\phi(u(t,x))|\leq
\eps/2$, we have in particular that for a.e.~$(t,x)\in [u\in I']$,
one has $u_n\in I$ for all $n$ large enough.

Thus for all $\eps<\delta$, for a.e.~point of $[u\in I']$ there
exists $N\in \N$ such that at this point one has
\begin{align*}
|\phi(u_n)-\phi(u)| & \leq
|\phi(u_n)-\phi_n(u_n)|+|\phi_n(u_n)-\phi(u)|
\\ & \leq \|\phi_n-\phi\|_{C([a,b])}+ \eps/2\leq \eps.
\end{align*}
Therefore $\phi(u_n)$ converges to $\phi(u)$ a.e.~on $[u\in I']$.
Since $\phi(\cdot)$ is continuously invertible on $I$, one also
has $u_n\to u$ a.e.~on $[u\in I']$. Since $I$ is open and
$I'\Subset I$ is arbitrary, the claim of the lemma follows.
\end{proof}

Now we start to identify $\chi$ with $\mathfrak a(u,\grad w)$.
First, according to Remark~\ref{rem:NeglectedSet}, $\mathfrak
a(u,\grad w)=0$ a.e.~on the set $[u\in E]= [w\in G]$. Using
Lemma~\ref{SmallSetTruncationLemma}, we now deduce that also
$\chi=0$ on this set.
\begin{lem}\label{lem:IdentOnE}
Let $\chi$ be the weak $L^{p'}(Q_T)$ limit of the sequence
$$
\chi_n=\mathfrak a_n(u_n,\grad \phi_n(u_n)),
$$
and let $w_n=\phi_n(u_n)$ converge to $w=\phi(u)$ a.e..
Then $\chi=0$ a.e.~on $[u\in E]$.
Moreover, $\chi_n$ converges strongly
to zero in $L^1([u\in E])$.
\end{lem}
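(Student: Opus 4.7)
The plan is to exploit the localized gradient estimate of Lemma~\ref{SmallSetTruncationLemma} in a neighbourhood of the exceptional set $G=\phi(E)$, together with the continuity of $\mathfrak{a}_n$ at $\xi=0$ and the uniform convergence $\mathfrak{a}_n\to\mathfrak{a}$ on compacts. First, I apply Lemma~\ref{SmallSetTruncationLemma} to each $u_n$ with the Borel set $F_n:=\phi_n^{-1}(G_\eps)$, where $G_\eps=\{z\in\R:\dist(z,G)<\eps\}$ as in $(H_3)$. The constant in that lemma depends only on the coercivity function in $(H_9)$, on $\|u_n\|_{L^\infty(Q_T)}$ and on $\|b_n(u_0^n)\|_{L^1(\Om)}+\|f_n\|_{L^1(Q_T)}$, all uniform in $n$ by Lemma~\ref{lem:apriori-estimates}(i) and the standing hypotheses of Theorem~\ref{th:ContDep}. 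Since $[u_n\in F_n]=[w_n\in G_\eps]$ and $\phi_n(F_n)\subset G_\eps$, this yields
\begin{equation*}
\iint_{[w_n\in G_\eps]}|\grad w_n|^p\;\leq\; C\,\meas(G_\eps)\;\leq\; C\,\eps,
\end{equation*}
with $C$ independent of $n$.

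Since each $\mathfrak{a}_n$ satisfies $\mathfrak{a}_n(r,0)=0$ by $(H_7)$, and $\mathfrak{a}_n\to\mathfrak{a}$ uniformly on $[-M,M]\times\overline{B(0,1)}$ (with $M$ the uniform $L^\infty$-bound on $u_n$ from Lemma~\ref{lem:apriori-estimates}(i)), there exist a modulus $\omega_{\mathfrak{a}}$ with $\omega_{\mathfrak{a}}(\delta)\to 0$ as $\delta\to 0$ and a sequence $r_n\to 0$ such that
\begin{equation*}
|\mathfrak{a}_n(r,\xi)|\;\leq\;\omega_{\mathfrak{a}}(\delta)+r_n\qquad\text{whenever }|r|\leq M\text{ and }|\xi|\leq\delta.
\end{equation*}
I then split $[w_n\in G_\eps]=A_n^\delta\cup B_n^\delta$ according to whether $|\grad w_n|\leq\delta$ or $|\grad w_n|>\delta$. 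On $A_n^\delta$, $|\chi_n|\leq\omega_{\mathfrak{a}}(\delta)+r_n$, so $\iint_{A_n^\delta}|\chi_n|\leq(\omega_{\mathfrak{a}}(\delta)+r_n)|Q_T|$. On $B_n^\delta$, Chebyshev's inequality and the first step give $\meas(B_n^\delta)\leq C\eps/\delta^p$, while the uniform $L^{p'}(Q_T)$-bound on $\chi_n$ coming from $(H_{10})$ and Lemma~\ref{lem:apriori-estimates}(ii) yields by H\"older $\iint_{B_n^\delta}|\chi_n|\leq C(\eps/\delta^p)^{1/p}$. Choosing e.g.\ $\delta=\eps^{1/(2p)}$ makes both bounds vanish as $\eps\downarrow 0$ after first letting $n\to\infty$.

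Next, I transfer the estimate from $[w_n\in G_\eps]$ to $[u\in E]=[w\in G]$. For any $(t,x)\in[w\in G]\setminus[w_n\in G_\eps]$ one has $\dist(w_n,G)\geq\eps$ while $w\in G$, so $|w_n-w|\geq\eps$; hence $\meas([w\in G]\setminus[w_n\in G_\eps])\leq\meas([\,|w_n-w|\geq\eps\,])\to 0$ as $n\to\infty$ by the a.e.\ convergence $w_n\to w$. Applying H\"older once more and the uniform $L^{p'}$-bound on $\chi_n$, the contribution of $|\chi_n|$ over this difference set vanishes as $n\to\infty$ for each fixed $\eps$. Combining the three steps gives $\limsup_n\iint_{[u\in E]}|\chi_n|\to 0$ as $\eps\downarrow 0$, i.e., $\chi_n\to 0$ strongly in $L^1([u\in E])$. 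The identification $\chi\equiv 0$ a.e.\ on $[u\in E]$ then follows from weak $L^{p'}(Q_T)$-convergence $\chi_n\weak\chi$: testing against arbitrary $\zeta\in L^\infty([u\in E])$ and comparing with strong $L^1$ convergence to zero forces $\iint_{[u\in E]}\chi\,\zeta=0$ for all such $\zeta$.

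The main obstacle is the mismatch between the set $[w_n\in G_\eps]$, on which the truncation estimate applies uniformly in $n$, and the ``limit'' set $[w\in G]$ where the conclusion is sought; this gap is bridged by the openness of $G_\eps$, the a.e.\ convergence $w_n\to w$, and hypothesis $(H_3)$ which controls $\meas(G_\eps)$ linearly in $\eps$, so that the two-parameter limit $\eps,\delta\to 0$ with $\eps/\delta^p\to 0$ closes the argument.
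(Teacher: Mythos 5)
Your proof is correct and follows the same overall strategy as the paper's: split $[u\in E]=[w\in G]$ into its intersection with $[w_n\in G_\eps]$ and the complementary piece $[w\in G]\setminus[w_n\in G_\eps]$; control the complementary piece by H\"older using the uniform $L^{p'}(Q_T)$ bound on $\chi_n$ together with $\meas\bigl([w\in G]\setminus[w_n\in G_\eps]\bigr)\to 0$ (which follows from $w_n\to w$ a.e.\ and the openness of $G_\eps$, exactly as in the paper); and then control $\iint_{[w_n\in G_\eps]}|\chi_n|$ by combining the localized gradient estimate of Lemma~\ref{SmallSetTruncationLemma} with hypothesis $(H_3)$.

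The one place where you genuinely diverge --- and, in my view, improve on the paper's exposition --- is the handling of $\iint_{[w_n\in G_\eps]}|\chi_n|$. The paper asserts that this term is majorated by $\tfrac{1}{c}\meas(G_\eps)$ ``by $(H_9)$ and by Lemma~\ref{SmallSetTruncationLemma}'', but $(H_9)$ only gives a lower bound for $\mathfrak a(r,\xi)\cdot\xi$, not a pointwise upper bound for $|\mathfrak a_n(u_n,\grad w_n)|$, and the gradient estimate alone does not control $\meas([w_n\in G_\eps])$, so an intermediate step is indeed needed. Your two-parameter decomposition of $[w_n\in G_\eps]$ into the small-gradient region $[|\grad w_n|\le\delta]$, handled through $\mathfrak a_n(r,0)=0$, the uniform continuity of $\mathfrak a$ on $[-M,M]\times\overline{B(0,1)}$ and the uniform convergence $\mathfrak a_n\to\mathfrak a$ on compacts, and the large-gradient region, handled through Chebyshev applied to $\iint_{[w_n\in G_\eps]}|\grad w_n|^p\le C\eps$ and H\"older with the $L^{p'}$ bound, supplies precisely the missing quantitative link; letting $n\to\infty$ then $\eps\downarrow 0$ with $\delta=\eps^{1/(2p)}$ closes the gap. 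An alternative intermediate step would be to invoke $(H_{10})$ and H\"older with exponents $p/(p-1)$ and $p$ on the gradient estimate, but your small/large split is clean and, unlike the paper's one-line justification, leaves nothing implicit.
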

\begin{proof}
Since $\char_{[u\in E]}\in L^\infty(Q_T)\subset L^{p}(Q_T)$, by
the definition of the weak convergence, the function
$\chi\char_{[u\in E]}\equiv\chi\char_{[w\in G]} $ is the weak
$(L^{p'}(Q_T))^N$ limit of $\chi_n\char_{[w\in G]}$. For all
$\eps>0$, choose an open set $G_\eps\supset G$ of measure less
than $\eps$. Because $w_n\to w$ a.e.~and $G_\eps$ is an open
neighbourhood of $G$, we have
$$
[u\in E]=[w\in G]=
R\cup\Bigl(\cup_{N\in \N} [w\in G,\, w_n\in G_\eps\,
\forall n\geq N] \Bigr),
$$
where $\meas(R)=0$. Since
$$
\Biggl([w\in G,\, w_n\in G_\eps\, \forall n\geq N]\Biggr)_{N\in \N}
$$
is an increasing collection of sets, the corresponding
sequence of measures converges to
$\meas([w\in G])$ as $N\to \infty$,
by the continuity of the Lebesgue measure.
Because
$$
\meas ([w\in G]) \geq \meas ([w\in G,\, w_N\in G_\eps]) \geq
\meas([w\in G,\, w_n\in G_\eps\, \forall n\geq N]),
$$
we conclude that $\meas([w\in G,\,w_n\notin G_\eps])$ tends to
zero as $n\to\infty$. Now by the H\"older inequality,
\begin{align*}
\|\chi_n\char_{[w\in G]}\|_{L^1(Q)}
& =\iint_{[w\in G]}
|\mathfrak a_n(u_n,\grad w_n)|
\\ & \leq \iint_{[w_n\in G_\eps]}
|a_n(u_n,\grad w_n)|
\\ & \qquad
+ \Biggl(\iint_{Q_T} |a_n(u_n,\grad w_n)|^{p'}\Biggr)^\frac{1}{p'}
\\ & \qquad\qquad \times
\Biggl(\meas([w\in G,\,w_n\notin G_\eps]) \Biggr)^\frac{1}{p}.
\end{align*}
For all fixed $\eps>0$, the last term tends to zero as $n\to
\infty$, thanks to the boundedness of the sequence $\mathfrak
a_n(u_n,\grad w_n)$ in $L^{p'}(Q_T)$. The first term in the
right-hand side converges to zero as $\eps\to 0$ uniformly in $n$;
indeed, by $(H_9)$ and by Lemma~\ref{SmallSetTruncationLemma}, it
is majorated by $\frac{1}{c}\meas(G_\eps)\leq \eps/c$.

It follows that $\chi \char_{[u\in E]}=0$ a.e.~on $Q_T$, which
ends the proof.
\end{proof}

Now we use the Minty-Browder argument to identify $\chi$
with $\mathfrak a(u,\grad w)$ on the sets $[u\in I]$, for all open
$I\subset \R\setminus E$. A crucial role is played by Lemma~\ref{lem:WeakWeakOk}, which
permits us to pass to the limit in the product
$$
{\mathfrak f}_{n}(u_{n})\cdot \grad w_n
$$
of two weakly converging sequences.

We proceed in the classical way, but use the test functions
$T_J(w)$ (at the limit) and $T_J(w_n)$ (before the passage to the
limit), where $J=\phi(I)$ and
$$
T_J(z):=\dsp\int_0^z \char_{J}(s)\,ds
$$
is the truncation function that localizes the
values of the solution to the interval $I$. Notice that we can
assume that $\grad T_J(w_n)$ converges to $\grad T_J(w)$ weakly in
$(L^p(Q_T))^N$. Indeed, the sequence $\Bigl( T_J(w_n) \Bigr)_n$ is
bounded in $L^p(0,T;W^{1,p}_0(\Om))$, hence (up to a subsequence)
it converges to a limit; this limit is identified with $T_J(w)$
since $w_n$ converges to $w$ in $L^1(Q_T)$, and $T_J$ is Lipschitz
continuous. Now let us give the details.

 We first pass to the limit into the
weak formulation of $(P_n)$ (see
Remark~\ref{rem:weak-AL-formulation}), getting
\begin{equation}\label{eq:LimitEqTheor}
\left\{\begin{array}{l} \dsp\pt \biggl(\int_0^1
\!\!b(\mu(\al))\,d\al\biggr)+\div\! \biggl(\int_0^1 \!\!{\mathfrak
f}(\mu(\al))\,d\al\biggr)+\int_0^1 \!\!\psi(\mu(\al))\,d\al= \div
\chi\; +\; f\\[5pt]
\hspace*{50mm}\quad \text{in $L^{p'}(0,T;W^{-1,p'}(\Om))+L^1(Q_T)$},\\
\dsp\int_0^1 \!\!b(\mu(\al))\,d\al|_{t=0}=b(u_0)
\end{array}\right.
\end{equation}
in the same sense as in Remark~\ref{rem:weak-AL-formulation}. Take
$\zeta\in \mathcal D([0,T))$. Because $w=\phi(u)\equiv
\phi(\mu(\al))$ for a.e.~$\alpha\in (0,1)$, using
Lemma~\ref{lem:IBP-measures} we have
\begin{equation}\label{eq:Ch-rule-mu}
\begin{split}
& \int_0^T\!\! \left \langle\pt \biggl(\int_0^1
\!\!b(\mu(\al))\,d\al\biggr)\,,\, T_J(\phi(u))\zeta\right \rangle
\\ & \qquad = -\iint_{Q_T} \!\int_0^1 \!\!B_J(\mu(\al))\,d\al\,
\pt \zeta-\int_\Om \!B_J(u_0) \zeta(0)
\end{split}
\end{equation}
with $\dsp B_J(z):=\int_0^z T_J(\phi(s))\,db(s)$. Define similarly
$\dsp B^n_J(z):=\int_0^z T_J(\phi_n(s))\,db_n(s)$.   By the
standard chain rule lemma of
\cite{AltLuckhaus,Otto:L1_Contr,CarrilloWittbold} we get
\begin{equation}\label{eq:Ch-rule-u-n}
 \int_0^T\!\! \left \langle \pt
b_n(u_n)\,,\, T_J(\phi_n(u_n))\zeta\right
\rangle = -\iint_{Q_T} \!\!B^n_J(u_n)\, \pt \zeta
-\int_\Om \!B^n_J(u^n_0) \zeta(0).
\end{equation}
One shows easily that $B^n_J$ converges to $B_J$ uniformly on
compact subsets of $\R$, because of \eqref{eq:converg-data}. In
particular, $B^n_J(u_0)$ converge to $B_J(u_0)$ in $L^1(\Om)$.
Moreover, the nonlinear weak-$\star$ convergence of $u_n$ to $\mu$
yields
$$
\lim_{n\to+\infty} \iint_{Q_T} \!\!B^n_J(u_n)\; \pt \zeta=
\iint_{Q_T} \!\int_0^1 \!\!B_J(\mu(\al))\,d\al\; \pt \zeta.
$$
Thus the left-hand side of \eqref{eq:Ch-rule-mu} coincides with
the ``$n\to+\infty$ limit" of the left-hand side of \eqref{eq:Ch-rule-u-n}.

Similarly, the nonlinear weak-$\star$ convergence of $u_n$ to $\mu$
permits to conclude that
\begin{align*}
\iint_{Q_T}\! \int_0^1\! \psi(\mu(\al))\,d\al\;
T_J(\phi(u))\zeta
& =\iint_{Q_T}\! \int_0^1\! \psi(\mu(\al))
T_J(\phi(\mu(\al)))\,d\al\,\zeta
\\ & =\lim_{n\to+\infty}\iint_{Q_T}
\!\!\psi_n(u_n) T_J(\phi_n(u_n))\,\zeta.
\end{align*}
Now let us take the test functions $T_J(w_{n})\zeta$
and $T_J(w)\zeta$ in the weak formulation of
$(P_n)$  and in \eqref{eq:LimitEqTheor}, respectively.
Without loss of restriction, we can assume
that $t=T$ is a Lebesgue point of $\dsp \int_0^1 \!\!B_J(\mu(\al))\,d\al$.
Using the above calculations
and Lemma~\ref{lem:WeakWeakOk}, then letting
$\zeta(t)$ converge to $\char_{[0,T)}(t)$ we deduce the equality
\begin{equation}\label{eq:MintyIneqTheor}
\iint_{Q_T} \chi\cdot \grad T_J(w) = 
\lim_{{n}\to \infty} \iint_{Q_T} {\mathfrak a}_n(u_n,\grad w_{n})
\cdot \grad T_J(w_{n}).
\end{equation}
\begin{lem}\label{lem:MintyTruncated}
Assume that for all $\xi\in \R^N$, $\mathfrak a_n(\cdot,\xi)$
converge to $\mathfrak a(\cdot,\xi)$ uniformly on compact subsets
of an open interval $I\subset \R\setminus E$.
 With the notation and assumptions above, one has
$\chi={\mathfrak a}(u,\grad w)$ a.e.~on $[u\in I]=[w\in J]$.
\end{lem}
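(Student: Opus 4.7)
The plan is to run the Minty--Browder monotonicity argument, localized to $[u\in I]=[w\in J]$. The essential ingredients are the identity \eqref{eq:MintyIneqTheor} (obtained from the weak formulations with test $T_J(w_n)$), the a.e.~convergence $u_n\to u$ on $[u\in I']$ for every $I'\Subset I$ provided by Lemma~\ref{lem:PointwiseConvI}, the uniform convergence hypothesis on $\mathfrak a_n(\cdot,\xi)$, the growth bound $(H_{10})$ combined with the uniform $L^\infty$ estimate of Lemma~\ref{lem:apriori-estimates}(i), and the observation that whenever $\supp\eta\subset[u\in I']$ with $I'\Subset I$ open, the corresponding $w=\phi(u)$ lies in a compact subset of $J$, so $\char_{[w_n\in J]}\to 1$ a.e.~on $\supp\eta$ (because $w_n\to w$ a.e.).

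First, I would establish a \emph{localized Minty inequality}. Fix $I'\Subset I$ open and a bounded measurable $\eta$ with $\supp\eta\subset[u\in I']$. By the monotonicity $(H_8)$ applied pointwise to $\xi=\grad w_n$ and $\eta$, multiplying by the non-negative cutoff $\char_{[w_n\in J]}$ and integrating,
\[
0\leq \iint_{Q_T} \char_{[w_n\in J]}\bigl(\chi_n-\mathfrak a_n(u_n,\eta)\bigr)\cdot\bigl(\grad w_n-\eta\bigr).
\]
Expanding and passing to $n\to\infty$: the leading term $\iint \char_{[w_n\in J]}\chi_n\cdot\grad w_n$ converges to $\iint_{[w\in J]}\chi\cdot\grad w$ by \eqref{eq:MintyIneqTheor}; the three other terms each pair a weakly converging factor ($\chi_n$ or $\grad w_n$) with a strongly converging one. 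Indeed, on $\supp\eta$, Lemma~\ref{lem:PointwiseConvI} yields $u_n\to u$ a.e., the uniform convergence hypothesis gives $\mathfrak a_n(u_n,\eta)\to \mathfrak a(u,\eta)$ a.e., and together with the uniform bound $|\mathfrak a_n(u_n,\eta)|\leq C(1+|\eta|^{p-1})$ coming from $(H_{10})$ and the $L^\infty$ estimate, dominated convergence gives strong $L^{p'}$ convergence of $\char_{[w_n\in J]}\mathfrak a_n(u_n,\eta)$ to $\mathfrak a(u,\eta)$; similarly $\char_{[w_n\in J]}\eta\to\eta$ strongly in $L^p$. Since $\eta$ and $\mathfrak a(u,\eta)$ both vanish outside $[w\in J]$, this yields
\[
0\leq \iint_{[w\in J]}\bigl(\chi-\mathfrak a(u,\eta)\bigr)\cdot\bigl(\grad w-\eta\bigr).
\]

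Next I would apply the Minty perturbation trick. Exhaust $I=\bigcup_{k\in\N} I'_k$ with $I'_k\Subset I$ open, and set $F_k=[u\in I'_k]$, so $F_k\uparrow[w\in J]$. Given $v\in (L^\infty(Q_T))^N$ with $\supp v\subset F_{k_0}$ and $\lambda\in\R$, substitute $\eta=(\grad w+\lambda v)\char_{F_k}$ (for $k\geq k_0$) in the localized Minty inequality. Splitting the integrand on $F_k$ (where $\grad w-\eta=-\lambda v$ and $\mathfrak a(u,\eta)=\mathfrak a(u,\grad w+\lambda v)$) and on $[w\in J]\setminus F_k$ (where $\eta=0$ and $\mathfrak a(u,\eta)=0$ by $(H_7)$), one obtains
\[
0\leq -\lambda\iint_{F_k}\bigl(\chi-\mathfrak a(u,\grad w+\lambda v)\bigr)\cdot v\;+\;\iint_{[w\in J]\setminus F_k}\chi\cdot\grad w.
\]
The ``defect'' $\iint_{[w\in J]\setminus F_k}\chi\cdot\grad w$ vanishes as $k\to\infty$: indeed $\chi\in L^{p'}(Q_T)$ as a weak limit and $\grad w\in L^p(Q_T)$ by Lemma~\ref{lem:apriori-estimates}(ii), so $\chi\cdot\grad w\in L^1(Q_T)$ and $[w\in J]\setminus F_k\downarrow\emptyset$. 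Sending $k\to\infty$, dividing by $|\lambda|$ for each sign of $\lambda$, and finally letting $\lambda\to 0^\pm$ by hemicontinuity of $\mathfrak a(u,\cdot)$ (inherited from $(H_7)$), one arrives at $\iint(\chi-\mathfrak a(u,\grad w))\cdot v=0$ for every admissible $v$. Since $k_0$ is arbitrary and $\bigcup_{k_0}F_{k_0}=[u\in I]$, this identifies $\chi=\mathfrak a(u,\grad w)$ a.e.~on $[u\in I]$.

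The main technical obstacle is the mismatch between the support constraint forced on $\eta$ in the first step (the convergence $\mathfrak a_n(u_n,\eta)\to\mathfrak a(u,\eta)$ is available only where we control $u_n\to u$, that is on $[u\in I']\subset[u\in I]$) and the goal of characterizing $\chi$ throughout $[u\in I]$. This is overcome by the exhaustion $F_k\uparrow[w\in J]$ together with the $L^1$-integrability of $\chi\cdot\grad w$, which makes the defect term disappear in the limit.
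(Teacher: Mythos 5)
Your argument is the same localized Minty--Browder strategy as the paper, built on the identical ingredients (identity \eqref{eq:MintyIneqTheor}, a.e.\ convergence $u_n\to u$ on $[u\in I]$ from Lemma~\ref{lem:PointwiseConvI}, $(H_7)$--$(H_{10})$, dominated convergence), but organized differently in two respects. First, instead of taking the test function $\zeta=\grad T_J(w)+\lambda\tilde\zeta$ supported in $[w\in J]$ and then splitting $Q_T$ into $[w\notin J]$, $[w\in J,w_n\in J]$, $[w\in J,w_n\notin J]$ (the last ``mismatch'' set being handled by equi-integrability), you multiply by $\char_{[w_n\in J]}$ \emph{before} invoking $(H_8)$, which removes the mismatch set at the cost of having to verify that $\char_{[w_n\in J]}\to 1$ a.e.\ on $\supp\eta$; both routes land on the same limit identity. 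Second, you restrict $\supp\eta$ to $[u\in I']$ with $I'\Subset I$ and then exhaust $I=\bigcup I'_k$, controlling the residual $\iint_{[w\in J]\setminus F_k}\chi\cdot\grad w$ by $L^1$-integrability of $\chi\cdot\grad w$. This exhaustion is actually unnecessary: Lemma~\ref{lem:PointwiseConvI} already gives $u_n\to u$ a.e.\ on all of $[u\in I]$, and since $J$ is open and $w_n\to w$ a.e., one has $\char_{[w_n\in J]}\to 1$ a.e.\ on $[w\in J]$, so you could take $\eta$ supported directly in $[w\in J]$ as the paper does. Finally, one small inconsistency: in your ``localized Minty inequality'' you require $\eta$ bounded, but you then substitute $\eta=(\grad w+\lambda v)\char_{F_k}$, which is only in $(L^p(Q_T))^N$ since $\grad w$ need not be bounded. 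This is harmless -- nothing in your first step actually uses boundedness; the domination $|\mathfrak a_n(u_n,\eta)|\leq C(1+|\eta|^{p-1})\in L^{p'}$ from $(H_{10})$ and the uniform $L^\infty$ bound on $u_n$ only needs $\eta\in L^p$ -- but you should state the hypothesis as $\eta\in(L^p(Q_T))^N$ with $\supp\eta\subset[u\in I']$ to make the two steps compatible.
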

\begin{proof}
Take an arbitrary function $\tilde \zeta \in (L^{p}(Q_T))^N$ such
that $\tilde \zeta=0$ a.e.~on $[w\notin J]$. Take $\lambda\in \R$.
Set $\zeta=\grad T_J(w)+\lambda \tilde \zeta$; we have $\zeta=0$
a.e.~on $[w\notin J]$. By the classical Minty-Browder argument,
considering $\pm\lambda\downarrow 0$ one concludes that
$\chi=\mathfrak a(u,\grad w)$ a.e.~on $[w\in J]$, provided the
following relations can be justified:
\begin{equation}\label{eq:TheMintyOnJ}
\begin{split}
\iint_{Q_T} \chi\cdot (\grad T_J(w)-\zeta)
& \geq \liminf_{n\to\infty}
\iint_{Q_T} \mathfrak a_n(u_n,\grad w_n)
\cdot (\grad T_J(w_n)-\zeta)\\
& \geq \liminf_{n\to\infty}
\iint_{Q_T} \mathfrak a_n(u_n,\zeta)
\cdot (\grad T_J(w_n)-\zeta)
\\ & =\liminf_{n\to\infty} \iint_{Q_T} \mathfrak a(u,\zeta) \cdot
(\grad T_J(w_n)-\zeta)\\
& =\iint_{Q_T} \mathfrak a(u,\zeta)
\cdot (\grad T_J(w)-\zeta).
\end{split}
\end{equation}
Now we justify \eqref{eq:TheMintyOnJ}. Because
\eqref{eq:MintyIneqTheor} holds and $\grad T_J(w_n)$ converges to
$\grad T_J(w)$ weakly in $(L^p(Q_T))^N$, the first inequality and
the last equality in \eqref{eq:TheMintyOnJ} are clear.

The second inequality comes from the monotonicity
of $\mathfrak a_n(u_n,\cdot)$. Indeed, by the choice of $\zeta$,
\begin{align*}
&\iint_{[w\notin J]}\!\!\! \mathfrak a_n(u_n,\grad w_n)
\cdot(\grad T_J(w_n)-\zeta)
\\ & =\iint_{[w\notin J,\, w_n\in J]}\!\!\!\!\!\!  \mathfrak a_n(u_n,\grad w_n)
\cdot\grad w_n
\\ & \ge 0 = \iint_{[w\notin J]}\!\!\! \mathfrak a_n(u_n,\zeta)
\cdot (\grad T_J(w_n)-\zeta),
\end{align*}
because $\mathfrak a_n(\cdot,0)\equiv 0$.
Further, since $J$ is open and $w_n\to w$ a.e.~on $Q_T$, as in the proof of
Lemma~\ref{lem:IdentOnE} we have $\meas([w\in J, w_n\notin J])\to
0$ as $n\to \infty$. On $[w\in J, w_n\in J]$ one has $\mathfrak
a_n(u_n,\grad w_n)=\mathfrak a_n(u_n,\grad T_J(w_n))$, and $(H_8)$
applies. On $[w\in J, w_n\notin J]$ the terms with $\grad
T_J(w_n)$ are zero; finally, as in the proof of
Lemma~\ref{lem:IdentOnE}, the integrals
$$
\iint_{[w\in J, w_n\notin J]} \mathfrak{a}_n(u_n,\grad
w_n)\cdot\zeta \quad \text{and} \quad \iint_{[w\in J, w_n\notin
J]} \mathfrak{a}_n(u_n,\zeta)\cdot\zeta
$$
both tend to zero, by the equi-integrability argument.

It remains to justify the last but one equality in
\eqref{eq:TheMintyOnJ}. Thanks to Lemma~\ref{lem:PointwiseConvI},
we have $u_n\to u$ a.e.~on $[w\in J]=[u\in I]$. In particular,
a.e.~on $[u\in I]$ one has $u_n\in I$ for sufficiently large $n$.
Using the locally uniform on $I$ convergence of $\mathfrak
a_n(\cdot,\xi)$ to $\mathfrak a(\cdot,\xi)$, we conclude that
$\mathfrak a_n(u_n,\zeta)$ converges to $\mathfrak a(u,\zeta)$
a.e.~on $[u\in I]$. By $(H_{10})$ and the Lebesgue dominated
convergence theorem, \eqref{eq:TheMintyOnJ} follows.
\end{proof}

Finally, notice that  thanks to \ref{def:entropy1p} and
Remark~\ref{rem:NeglectedSet}, we have $\mathfrak a(\mu,\grad w)\equiv \mathfrak a(u,\grad w)$.
The identification of $\chi$ thus being complete, from \eqref{eq:LimitEqTheor} we readily conclude that
$(\mu,w)$ verifies \ref{def:entropyWeakp}.

Now we can pass to the limit in the entropy
inequalities corresponding to $(P_n)$ and deduce
\ref{def:entropy2p}. Because regular boundary entropy pairs
$(\eta_{c,\eps}^\pm,\mathfrak{q}_{c,\eps}^\pm)$ can be
approximated by convex combinations of semi-Kruzhkov pairs, we
have the analogue of \ref{def:entropy2} for $u_n$ with
$(\eta_{c}^\pm,\mathfrak{q}_{c}^\pm)$ replaced by
$(\eta_{c,\eps}^\pm,\mathfrak{q}_{c,\eps}^{n,\pm})$ (with
$\mathfrak{q}_{c,\eps}^{n,\pm}$ converging to
$\mathfrak{q}_{c,\eps}^{\pm}$ uniformly on compact subsets of $\R$).

Consider the third term in  \ref{def:entropy2}. We have
\begin{equation}\label{eq:pass-to-the-lim-entropic}
\begin{split}
& \lim_{n\to\infty} \iint_{Q_T} (\eta_{c,\eps}^\pm)'(u_n)
\mathfrak a_n(u_n,\Grad w_n)  \cdot \grad \xi
\\ & \quad =\lim_{n\to\infty} \Biggl(\iint_{[w\in G]}+\iint_{[w\notin G]} \Biggr)
(\eta_{c,\eps}^\pm)'(u_n)\grad \xi\cdot \mathfrak{a}_n(u_n,\Grad w_n).
\end{split}
\end{equation}
By Lemma~\ref{lem:IdentOnE} and because
$(\eta_{c,\eps}^\pm)'(u_n)\grad \xi$ are bounded uniformly in $n$,
the first term converges to zero; also notice that
$$
0=\iint_{[w\in G]} (\eta_{c,\eps}^\pm)'(u)\grad \xi\cdot
\mathfrak{a}(u,\Grad w)
$$
because $\mathfrak{a}(u,\Grad w)
=\mathfrak{a}(u,0)=0$ a.e.~on $[w\in G]$. By
Lemma~\ref{lem:PointwiseConvI}, we have $u_n\to u$ a.e.~on
$[w\notin G]$; by the dominated convergence theorem, we infer that
$(\eta_{c,\eps}^\pm)'(u_n)\grad \xi$ converges to
$(\eta_{c,\eps}^\pm)'(u)\grad \xi$ strongly in $L^p([w\notin G])$.
Because also $\chi_n=a_n(u_n,\Grad w_n)$ converges to
$\chi=a(u,\Grad w)$ weakly in $L^{p'}([w\notin G])$, the second
term in the right-hand side of \eqref{eq:pass-to-the-lim-entropic}
converges to
$$
\int_{[w\notin G]} (\eta_{c,\eps}^\pm)'(u)\grad \xi
\cdot \mathfrak{a}(u,\Grad w).
$$

The passage to the limit  as $\eps\to 0$ in the other terms in
\ref{def:entropy2} is straightforward, using the uniform
boundedness of $(u_n)_n$ and the nonlinear weak-$*$ convergence
property \eqref{nonlinweakstar}. At the limit, we conclude that
\ref{def:entropy2p} also holds. Thus
 $(\mu,w)$ is an entropy process solution of $(P)$.

Now by the result of Theorem~\ref{th:EPSunique}(i),  $(\mu,w)$
gives rise to an entropy solution
$$
u=\dsp \int_0^1 \mu(\al)\,d\al
$$
of $(P)$. By \eqref{nonlinweakstar} (with $F=\text{Id})$, we
conclude that $(u_n)_n$ possesses a subsequence that converges in
$L^\infty(Q_T)$ weak-$\star$ to $u$; we have already shown that the
corresponding subsequence $\phi_n(u_n)$
converges to $\phi(u)$ in $L^1(Q_T)$.

Moreover, $b(\mu(\al))$ and $\psi(\mu(\al))$ are in fact
independent of $\alpha$. By Theorem~\ref{th:EPSunique}(iii), we
also have the uniqueness of $b(u)$ and $\psi(u)$ such that $u$ is
an entropy solution of $(P)$. By the well-known result of the
nonlinear weak-$\star$ convergence (see e.g., Hungerb\"uhler
\cite{Hungerbuhler}), it follows that the whole sequences
$(b_n(u_n))_n$ and $(\psi_n(u_n))_n$ converge to $b(u)$ and
$\psi(u)$, respectively, in measure on $Q_T$ and in $L^1(Q_T)$.

This ends the proof of Theorem~\ref{th:ContDep}.

\begin{rem}\label{rem:bounded-b}
In the case  assumption $(H_5)$ is
dropped, in order to deduce that $u$ is
an entropy process solution, along with the
assumption $(H_{str})$ one needs a growth
restriction on $\tilde{\mathfrak f}$ of the following kind: there exists a function
$M\in C(\R^+;\R^+)$ and a function $\mathcal L$
with $\lim\limits_{z\to +\infty} {\mathcal L}(z)/z=0$ such that
\begin{align*}
|\tilde{\mathfrak
f}(b(r),\phi(r),\psi(r))|\leq M(|b(r)|)\;
{\mathcal L}\left(|\phi(r)|^{p}\! +\!\!\int_0^r\! \phi(s)\,db(s)+\psi(r)\phi(r)\right)
\end{align*}
for all $r\in\R$, and the same assumption with $|\tilde{\mathfrak
f}(b(r),\phi(r),\psi(r))|$ replaced with $|b(r)|$ and with
$|\psi(r)|$. Indeed, these inequalities make it possible to use
the nonlinear weak-$\star$ convergence framework of Ball
\cite{Ball} and Hungerb\"uhler \cite{Hungerbuhler} without the
uniform $L^\infty$ bound on $(u_n)_n$.
\end{rem}

\section{Proof of Theorem~\ref{th:EPS-ESexists}}
\label{sec:General-b}

In this section, we prove Theorem~\ref{th:EPS-ESexists}. The
uniqueness claim was shown in Theorem~\ref{th:EPSunique}; also
notice that the continuous dependence result under the structure
assumption $(H_{str})$ was shown in Theorem~\ref{th:ContDep}. Let
us first prove the existence claim.

(i) First, consider the case where assumption
$(H_{str})$ is fulfilled. Consider an approximation of $(P)$ by
regular problems ($P_{n}$) with data
$(b_n,\psi,\phi_n,\mathfrak{a},\tilde{\mathfrak{f}};u_0,f)$ such
that the assumptions of Theorem~\ref{th:ContDep} are fulfilled,
and $b_n,[b_n]^{-1},{\mathfrak f}_{n},\phi_{n},[\phi_{n}]^{-1}$
are Lipschitz continuous on $\R$.
 Using  classical methods (cf. Alt and Luckhaus
\cite{AltLuckhaus}, Lions \cite{Lions}), one shows that there
exists a weak solution $u_{n}\in L^p(0,T;W^{1,p}_0(\Om))$ to the
problem $(P_{n})$ in the sense
$$
\pt b_n(u_{n})+\div \tilde{\mathfrak f}_{n}(u_{n})+\psi(u_n)
= \div \mathfrak{a}(u_n,\grad
\phi_{n}(u_{n})) + f
$$
in $L^{p'}(0,T;W^{-1,p'}(\Om))+L^1(Q_T)$, with initial data
$$
b_n(u_{n})|_{t=0}=b_n(u_0).
$$
In addition, $u_{n}$ verifies the entropy formulation
\ref{def:entropy2}, obtained along the lines of Carrillo
\cite{Carrillo}. By Theorem~\ref{th:ContDep}, we conclude that
there exists an entropy solution of $(P)$.

To prove existence without the structure condition $(H_{str})$, we
use the particular multi-step approximation approach of Ammar and
Wittbold \cite{AmmarWittbold} (see also Ammar and Redwane
\cite{AmmarRedwane}). We replace $b$ by $b_{k}:=b+\frac 1k
\text{Id}$ and $\psi$, by $\psi_{m,n}:=\psi+\frac 1n \text{Id}^+ +
\frac 1m \text{Id}^-$.
 The result of (i) for the
corresponding problem $(P_{m,n}^k)$ is already proved.

There exists a function $u^k_{m,n}$, constructed by means of the
nonlinear semigroup theory (see e.g., \cite{BCP}), such that
$b_k(u^k_{m,n})$ is the unique integral solution to the abstract
evolution problem associated with $(P_{m,n}^k)$ (here and below,
we refer to  Ammar and Wittbold \cite{AmmarWittbold}, Ammar and
Redwane \cite{AmmarRedwane} for details). One then shows that
$u^k_{m,n}$ coincides with the unique entropy solution of
$(P_{m,n}^k)$, the existence of this entropy solution being
already shown. Further, the whole set $(u^k_{m,n})_{k,m,n}$
verifies the uniform {\it a priori} estimates of Lemmas
\ref{lem:apriori-estimates}, \ref{SmallSetTruncationLemma}.

We then pass to the limit in $u^k_{m,n}$ in the following order:
first $k\to +\infty$, then $n\to +\infty$, $m\to +\infty$.

While letting $k\to+\infty$, we use the fact that
$\psi_{m,n}^{-1}$ is Lipschitz continuous. The fundamental
estimates for the semigroup solutions permit to show that
$\psi_{m,n}(u_{m,n})$ are uniformly continuous on $(0,T)$  with
values in $L^1(\Om)$; thus we get the strong precompactness of
$(u^k_{m,n})_k$ in $L^1(Q_T)$. Thus, up to a subsequence,
$u^k_{m,n}$ converge to $u_{m,n}$ which is an entropy solution of
problem $(P_{m,n})$ corresponding to the data
$(b,\psi_{m,n},\phi,\mathfrak{a},\tilde{\mathfrak{f}};u_0,f)$.

Finally, we use the inequalities $u_{m+1,n}\leq u_{m,n} \leq
u_{m,n+1}$ which follow readily form the comparison principle of
Theorem~\ref{th:EPSunique}(ii). The monotonicity argument yields
the strong convergence of $u_{m,n}$. Now the whole scheme of the
proof of Theorem~\ref{th:ContDep} applies with considerable
simplifications, because no nonlinear weak-$\star$ convergence arguments
are not needed. Passing to the limit in $u_{m,n}$, we conclude
that the limit $u$ is an entropy solution of the original problem
$(P)$. This ends the existence proof.

(ii) Existence for the limit data $(u_0,f)$ is now shown
and we can apply Theorem~\ref{th:EPSunique}(ii). Then we deduce
the $L^1(Q_T)$ convergence of $b(u_n),\psi(u_n)$ to
$b(u),\psi(u)$, respectively. The convergence of $\phi(u_n)$ to
$\phi(u)$ in $L^1(Q_T)$ follows, by hypothesis $(H'_{str})$ and
because our assumptions imply the uniform $L^\infty(Q_T)$ bound on
$u_n$.

 The remaining claim of the strong $(L^p(Q_T))^N$
convergence of $\grad \phi(u_n)$ is a rather standard part of the
Minty-Browder trick.
 The argument is based upon the
proof of Theorem~\ref{th:ContDep}. Because we already have the
strong compactness of $(\phi(u_n))_n$, we can bypass the
hypothesis $(H_{str})$ and deduce that the $L^\infty$ weak-$\star$ limit
$\hat u$ of $u_n$ is an entropy solution of $(P)$
 with the data $(u_0,f)$. In particular, the $(L^{p'}(Q_T))^N$ weak limit $\chi$ of
$\mathfrak{a}(u_n,\grad\phi(u_n))$ is  equal to $\mathfrak{a}(\hat
u,\grad\phi(\hat u))$. Now notice that by
Theorem~\ref{th:EPSunique}(iii), under the structure condition
$(H'_{str})$ we also have the uniqueness of $\phi(u)$ such that
$u$ is an entropy solution of $(P)$; moreover, by
Remark~\ref{rem:NeglectedSet}, we also have the uniqueness of
$\mathfrak{a}(u,\grad\phi(u))$ such that $u$ is an entropy
solution of $(P)$. Thus  $\chi$ coincides with $\mathfrak{a}(
u,\grad\phi( u))$, so that \eqref{eq:MintyIneqTheor} now reads as
\begin{equation}\label{eq:MintyIneqTheor-2}
\iint_{Q_T} \mathfrak{a}(u,\!\grad\phi(u))\cdot\!\grad\phi(u) =
\lim_{{n}\to \infty} \iint_{Q_T} {\mathfrak a}_n(u_n,\!\grad
w_{n}) \cdot \!\grad \phi(u_n).
\end{equation}
It follows by the weak convergences of $\grad \phi(u_n)$ and of
$\mathfrak{a}(u_n,\grad\phi(u_n))$ that
\begin{equation}\label{eq:MintyIneqTheor-3}
 \lim_{{n}\to \infty}
\iint_{Q_T} \Bigl( \mathfrak{a}(u,\!\grad\phi(u))-{\mathfrak
a}(u_n,\!\grad \phi(u_n))\Bigr) \cdot \Bigl( \grad\phi(u)-\grad
\phi(u_n)\Bigr)=0.
\end{equation}  Notice that
a.e.~on the set $[u\in E]$,
  $\grad\phi(u_n)$ converges to $0=\grad \phi(u)$, by Lemma~\ref{lem:IdentOnE} and by the coercivity assumption
$(H_9)$. On the set $[u\notin E]$, we can use
Lemma~\ref{lem:PointwiseConvI} to replace
$\mathfrak{a}(u,\!\grad\phi(u))$ with
$\mathfrak{a}(u_n,\!\grad\phi(u))$ in the above formula
\eqref{eq:MintyIneqTheor-3}.
 Using the uniform  monotonicity assumption $(H'_8)$, we can now conclude that the convergence of $\grad\phi(u_n)$ to
$\grad \phi(u)$ holds a.e.~on $Q_T$.

Separating again the sets $[u\in E]$ and $[u\notin E]$, we deduce
that the sequence of nonnegative functions
$\mathfrak{a}(u_n,\!\grad\phi(u_n))\cdot\!\grad\phi(u_n)$
converges to $\mathfrak{a}(u,\!\grad\phi(u))\cdot\!\grad\phi(u)$
a.e.~on $Q_T$. Together with \eqref{eq:MintyIneqTheor-2}, this
implies that
$\mathfrak{a}(u_n,\!\grad\phi(u_n))\cdot\!\grad\phi(u_n)$ also
converge to $\mathfrak{a}(u,\!\grad\phi(u))\cdot\!\grad\phi(u)$ in
$L^1(Q_T)$; in particular, they are equi-integrable on $Q_T$. The
coercivity assumption $(H_9)$ now implies the equi-integrability
on $Q_T$ of the functions $|\grad \phi(u_n)|^p$. Combining this
argument with the a.e.~convergence of $\grad \phi(u_n)$, we deduce
our claim from the Vitali theorem. The $(L^{p'}(Q_T))^N$
convergence of $\mathfrak{a}(u_n,\grad\phi(u_n))$  to
   $\mathfrak{a}(u,\grad\phi(u))$ follows in the same way.

\section{Well-posedness for the doubly nonlinear elliptic problem}
\label{sec:Extensions}
We first notice that the well-posedness result for the degenerate
elliptic  problem
$$
 \leqno (S) \qquad  \left\{\begin{array}{l}
      \displaystyle  \psi(u)+ \div { \tilde{\mathfrak
      f}}(\psi(u),\phi(u))-
      \div { \mathfrak a}(u,\grad \phi(u))=s \quad \text{in $\Om$},
      \\u=0 \qquad \text{on $\ptl\Om$}
   \end{array}\right.
$$
$$
\leqno (H'_5) \qquad \text{$s\in L^\infty(\Om)$
and $\psi(\R)=\R$ }
$$
 follows from
Theorem~\ref{th:EPS-ESexists}, upon setting $b\equiv 0$,
$f(t,\cdot)\equiv s(\cdot)$ and arbitrarily prescribing $u_0$. The
definition of an entropy solution of $(S)$ can also be formally
obtained from Definition~\ref{def:Entropy-Solution}.

Let us notice that the analogue of the general continuous
dependence property of Theorem~\ref{th:ContDep} holds without any
additional structure condition:
\begin{theo}\label{th:ContDep-Stat}
 Let
$(\psi_n,\phi_n,\mathfrak{a}_n,\tilde{\mathfrak{f}}_n;s)$,
$n\in\N$, be a sequence converging to
$(\psi,\phi,\mathfrak{a},\tilde{\mathfrak{f}};s)$ in the following
sense:
\begin{equation*}
\begin{array}{ll}
\cdot & \psi_n,\phi_n \text{~converge pointwise
to~} \psi,\phi \text{~respectively};\\
\cdot & \tilde{\mathfrak{f}}_n,\mathfrak{a}_n \text{~converge to~}
\tilde{\mathfrak{f}},\mathfrak{a}, \text{~respectively, uniformly on compacts};\\
 \cdot & s_n \text{ converges to } s
 \text{ in } L^1(\Om).
\end{array}
\end{equation*}
Assume that $(\psi,\phi,\mathfrak{a},\tilde{\mathfrak{f}};s)$ and
$(\psi_n,\phi_n,\mathfrak{a}_n,\tilde{\mathfrak{f}}_n;s_n)$ (for each $n$)
satisfy the hypotheses $(H_1)$, $(H_6)$-$(H_{11})$, and $(H'_5)$,  and, moreover, that the
functions $C$ in $(H_9)$, $(H_{10})$, and $(H_{11})$
as well as the $L^\infty(\Om)$  bound in $(H'_5)$ are independent of $n$.
We denote by $(S_n)$ the analogue of  problem $(S)$ corresponding
to the data and coefficients
$(\psi_n,\phi_n,\mathfrak{a}_n,\tilde{\mathfrak{f}}_n;s_n)$.

Assume that $\phi$ satisfies the technical hypotheses
$(H_2)$,$(H_3)$.

Let $u_n$ be an entropy solution of problem $(S_n)$. Then the functions $u_n$ converge to an
entropy solution $u$ of $(S)$ in $L^\infty(\Om)$ weakly-*, up to a
subsequence. Furthermore, the functions $\phi_n(u_n)$ converge to $\phi(u)$ in
$L^p(\Om)$ up to a subsequence, and the whole
sequence $\psi_n(u_n)$ converges
to $\psi(u)$ in $L^1(\Om)$.
\end{theo}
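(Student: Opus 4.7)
The approach mirrors the proof of Theorem~\ref{th:ContDep}, with substantial simplifications coming from the stationary setting: no structure condition is needed because the natural energy bound on $\phi_n(u_n)$ in $W^{1,p}_0(\Om)$ yields strong $L^p(\Om)$ compactness for free via Rellich--Kondrachov, whereas in the parabolic case the analogous strong compactness required the time-translation estimate of Lemma~\ref{lem:apriori-estimates}(iii), which in turn used $(H_{str})$. My first step is the uniform estimates: I will construct constant entropy sub/supersolutions from $\psi_n^{-1}(\pm\|s_n\|_{L^\infty(\Om)})$, which are uniformly bounded thanks to the surjectivity of $\psi$ in $(H'_5)$ together with the pointwise convergence $\psi_n\to\psi$, and invoke the comparison principle (elliptic analogue of Theorem~\ref{th:EPSunique}(ii), obtained by setting $b\equiv 0$) to get $\|u_n\|_{L^\infty(\Om)}\le M$. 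Then I will test the weak formulation of $(S_n)$ with $\phi_n(u_n)\in W^{1,p}_0(\Om)\cap L^\infty(\Om)$; the convection contribution vanishes by the chain-rule argument behind Lemma~\ref{lem:WeakWeakOk} combined with the Dirichlet boundary condition, and $(H_9)$, $(H_{10})$ yield uniform bounds on $\phi_n(u_n)$ in $W^{1,p}_0(\Om)$, on $\psi_n(u_n)\phi_n(u_n)$ in $L^1(\Om)$, and on $\mathfrak{a}_n(u_n,\grad\phi_n(u_n))$ in $(L^{p'}(\Om))^N$. Lemma~\ref{SmallSetTruncationLemma} adapts verbatim and provides the localized estimate of $\grad\phi_n(u_n)$ on preimages of small neighbourhoods of $G=\phi(E)$.

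Next I will extract convergent subsequences: $\phi_n(u_n)\to w$ strongly in $L^p(\Om)$ (by Rellich--Kondrachov) and a.e., weakly in $W^{1,p}_0(\Om)$; $\mathfrak{a}_n(u_n,\grad\phi_n(u_n))\rightharpoonup\chi$ weakly in $(L^{p'}(\Om))^N$; and $u_n$ admits a nonlinear weak-$\star$ limit $\mu\in L^\infty(\Om\times(0,1))$ in the sense of \eqref{nonlinweakstar}. Setting $u=\int_0^1\mu(\cdot,\alpha)\,d\alpha$, the strong convergence of $\phi_n(u_n)$ together with the locally uniform convergence of $\phi_n$ to $\phi$ forces $\phi(\mu(\cdot,\alpha))\equiv w=\phi(u)$. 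To identify $\chi$ with $\mathfrak{a}(u,\grad\phi(u))$, I will follow the truncated Minty--Browder strategy of Lemma~\ref{lem:MintyTruncated}: test the weak formulations with $T_J(\phi_n(u_n))$ and $T_J(\phi(u))$ for open intervals $J\subset\R\setminus G$, eliminate the convection term via the chain-rule identity of Lemma~\ref{lem:WeakWeakOk}, apply Lemma~\ref{lem:PointwiseConvI} for the pointwise convergence of $u_n$ on $[u\in\phi^{-1}(J)]$, and let $J$ exhaust $\R\setminus G$. On the complementary set $[u\in E]$, Lemma~\ref{lem:IdentOnE} (together with $(H_3)$) yields $\chi=0=\mathfrak{a}(u,\grad\phi(u))$.

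Finally, I will pass to the limit in the semi-Kruzhkov entropy inequalities exactly as in \eqref{eq:pass-to-the-lim-entropic}, after approximating $(\eta_c^\pm,\mathfrak{q}_c^\pm)$ by regular boundary entropy pairs. This will establish that $(\mu,w)$ is an entropy process solution of $(S)$. The elliptic analogue of Theorem~\ref{th:EPSunique}(i) then yields that $u$ is an entropy solution of $(S)$ and that $\psi(\mu(\alpha))\equiv\psi(u)$, while Theorem~\ref{th:EPSunique}(iii) gives uniqueness of $\psi(u)$, so the standard nonlinear weak-$\star$ convergence result of Hungerb\"uhler~\cite{Hungerbuhler} upgrades the subsequential convergence of $\psi_n(u_n)$ to convergence of the full sequence in $L^1(\Om)$. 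The main conceptual point to check—and the only reason the structure condition can be dropped here—is precisely that Rellich--Kondrachov supplies strong compactness of $\phi_n(u_n)$ directly from the $W^{1,p}_0$ bound, bypassing the concave-modulus construction that in the parabolic setting forced $(H_{str})$; everything else in the argument is robust under mere pointwise convergence of $\psi_n,\phi_n$ and locally uniform convergence of $\mathfrak{a}_n,\tilde{\mathfrak{f}}_n$.
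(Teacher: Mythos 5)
Your proposal is correct and takes essentially the same approach as the paper: the paper's own proof of Theorem~\ref{th:ContDep-Stat} simply refers back to the proof of Theorem~\ref{th:ContDep}, noting---exactly as you do---that the uniform $L^p(\Om)$ bound on $\grad\phi_n(u_n)$ already gives strong precompactness of $\phi_n(u_n)$ via Rellich--Kondrachov, so the time-translation estimate of Lemma~\ref{lem:apriori-estimates}(iii) and hence the structure condition $(H_{str})$ can be dispensed with.
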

\noindent The proof of Theorem~\ref{th:ContDep-Stat} is contained
in the one of Theorem~\ref{th:ContDep}, because the $L^p(\Om)$
bound on $\grad w_n$ is sufficient for the strong precompactness
of $w_n$.

\newpage

\end{document}